\pgfplotsset{compat=newest}
\algnewcommand{\Inputs}[1]{%
  \State \textbf{inputs:}
  \Statex \qquad \hspace*{\algorithmicindent}{\raggedright #1}
}
\algnewcommand{\Initialize}[1]{%
  \State \textbf{initialize:}
  \Statex \hspace*{\algorithmicindent}{\raggedright #1}
}
\algnewcommand{\LineComment}[2]{\Statex #1 \(\triangleright\) #2}
\DeclareMathOperator*{\argmin}{arg\,min}
\newcommand{\diag}{\operatorname{diag}} 
\crefname{section}{Section}{Sections}
\crefname{equation}{}{}
\crefname{algorithm}{Algorithm}{Algorithms}
\crefname{corollary}{Corollary}{Corollaries}
\crefname{figure}{Fig}{Figs}
\crefname{assumption}{Assumption}{Assumptions}
\crefname{table}{Table}{Tables}
\crefname{theorem}{Theorem}{Theorems}
\crefname{lemma}{Lemma}{Lemmas}
\newlength{\mywidth}
\tikzstyle{block} = [draw, rectangle,
\tikzstyle{sum} = [draw, circle, node distance=1\mywidth]
\tikzstyle{input} = [coordinate]
\tikzstyle{output} = [coordinate]
\tikzstyle{pinstyle} = [pin edge={to-,thin,black}]
\newcounter{inlineenum}
\renewcommand{\theinlineenum}{\roman{inlineenum}}
  \newenvironment{inlineenum}
    {\unskip\ignorespaces\setcounter{inlineenum}{0}%
     \renewcommand{\item}{\refstepcounter{inlineenum}{\textit{\theinlineenum})~}}}
    {\ignorespacesafterend}
\journal{Journal}
\begin{document}
\def\mytitle{Distributed Optimization of Finite Condition Number for Laplacian Matrix in Multi-Agent Systems}

\title{\mytitle}

\author[1]{Yicheng Xu}

\author[1]{Faryar Jabbari}

\authormark{Xu \textsc{et al.}}
\titlemark{\mytitle}

\address[1]{\orgdiv{Department of Mechanical and Aerospace Engineering}, \orgname{University of California Irvine}, \orgaddress{\street{Engineering Gateway 4200}, \city{Irvine}, \state{CA}, \country{USA}}}



\corres{Yicheng Xu\email{yichex7@uci.edu}}



\abstract[Abstract]{
    This paper addresses the distributed optimization of the finite condition number of the Laplacian matrix in multi-agent systems. The finite condition number, defined as the ratio of the largest to the second smallest eigenvalue of the Laplacian matrix, plays an important role in determining the convergence rate and performance of consensus algorithms, especially in discrete-time implementations. We propose a fully distributed algorithm by regulating the node weights. The approach leverages max consensus, distributed power iteration, and consensus-based normalization for eigenvalue and eigenvector estimation, requiring only local communication and computation. Simulation results demonstrate that the proposed method achieves performance comparable to centralized LMI-based optimization, significantly improving consensus speed and multi-agent system performance. The framework can be extended to edge weight optimization and the scenarios with non-simple eigenvalues, highlighting its scalability and practical applicability for large-scale networked systems.
}

\keywords{Optimization, Multi-agent systems, Laplacian matrix, Finite condition number}

\jnlcitation{\cname{%
        \author{Y Xu.}, and
        \author{F Jabbari}}.
    \ctitle{\mytitle} \cjournal{\it J Comput Phys.} \cvol{2025;00(00):1--18}.}

\maketitle

\renewcommand\thefootnote{}

\renewcommand\thefootnote{\fnsymbol{footnote}}
\setcounter{footnote}{1}

\section{Introduction}
\label{sec: Introduction}
The study of Multi-Agent Systems (MAS) has drawn a significant attention in recent years due to their wide-ranging applications in areas such as distributed control, sensor networks and robotics \cite{Zhang2024a,zhang2024,Gao2024}. A fundamental aspect of MAS is the underlying communication topology, typically modeled as a graph, which represents how information is exchanged among agents \cite{Mesbahi2010}. The Laplacian matrix associated with this graph plays a pivotal role in determining the collective behavior of the network \cite{Liu2019,Su2022}.

A key challenge in MAS is to understand how the spectral properties of the Laplacian matrix impacts the feasibility of consensus \cite{Griparic2022,Hengster-Movric2015}. In continuous-time control, consensus can generally be achieved for any connected graph under standard controllability assumptions \cite{HongweiZhang2011,Li2010,Giordano2016}. However, in real-life applications, actuators and controllers are implemented in discrete time due to digital hardware constraints. For discrete-time controllers, even within the state feedback framework, not every graph structure guarantees consensus for the agents with open-loop unstable modes. It turns out that the finite condition number \cite{GuoxiangGu2012,You2011,Feng2020}, i.e., the largest eigenvalue over the second smallest (Fiedler) eigenvalue, of the Laplacian matrix is a critical factor in determining the feasibility of consensus.

Furthermore, the finite condition number of the Laplacian matrix also plays a crucial role in improving the overall MAS performance. The convergence rate of the average consensus algorithm can be directly expressed in terms of the finite condition number \cite{Xiao2004}. A small finite condition number implies the faster convergence rate. In the context of output feedback synthesis, for example, the finite condition number influences the feasibility region of the Linear Matrix Inequalities (LMIs) that characterize the $l_2$ gain of the closed-loop system performance \cite{Xu2024a}. Optimizing the finite condition number of the Laplacian matrix can be significantly beneficial for enhancing the performance of MAS (\cref{sec: Motivation and Problem Statement}). There are other applications of the finite condition number in MAS, such as improving performance in event-triggered control \cite{Liang2023} and synchronization in complex networks \cite{SORRENTINO2007,Barahona2002}.

While some prior works have explored the modification of network topology, such as the addition or removal of edges to enhance MAS performance \cite{Kim2014,Cao2025}, there have been researches over the past decades on optimizing the edge weights of the Laplacian matrix, particularly with the objective of maximizing the Fiedler eigenvalue \cite{Tam2020,Ogiwara2017,Nagarajan2015,Somisetty2025,Tavasoli2024}. Optimizing the finite condition number of the Laplacian matrix, however, has received relatively limited attention. Among the most closely related works, Shafi et al. \cite{Shafi2012} addressed the problem by reformulating it centrally into LMI. The resulting search is not fully convex. which leads to an iterative algorithm that requires continuously solving the problem using a central Semi-Definite Programming (SDP) solver. Therefore, it is not very practical for large, distributed networks. Kempton et al. \cite{Kempton2017} introduced a distributed approach under edge weights based on multi-layer Ordinary Differential Equations (ODEs). While this method achieves a fully distributed implementation, its convergence is only locally guaranteed due to the inherent limitations of the primal-dual framework. Moreover, the use of fast-slow ODEs can present significant implementation challenges.

This work seeks to address the minimization of the finite condition number in a fully distributed manner in a discrete implementation, as the finite condition number directly impacts consensus and performance mostly in discrete-time. It can also be readily modified to maximize the Fiedler eigenvalue alone by simply changing the objective function in the optimization formulation. The discrete-time framework allows the gradient descent updates to be paused until the distributed eigenvector estimations converge, ensuring accurate optimization at each step. We put our focus on node-weighted Laplacian, instead of the edge-weighted ones, based on our experience, that the node-weighted Laplacian often achieves a lower finite condition number compared to edge-weighted approaches. The proposed method leverages augmented Lagrangian \cite{Birgin2014,Dimitri2015} to achieve global convergence. It is demonstrated that only local information and minimal global knowledge on the graph diameter is needed due to the use of max-consensus \cite{Lucchese2015,Nejad2009}. When the topology changes, the diameter of the graph can be distributed evenly by all agents \cite{Garin2012} without requiring a central authority. The algorithm incorporates distributed eigenvalue and eigenvector estimation using power iteration \cite{Corso1997,Lin2010} and consensus-based normalization.

Simulation results confirm that the distributed algorithm achieves performance nearly identical to centralized LMI-based optimization, yielding reductions in the finite condition number and marked improvements in consensus convergence rates and disturbance rejection capabilities \cite{Xu2024a}. The framework is robust to changes in network topology and can be extended to both node and edge weight optimization, as well as to scenarios involving non-simple eigenvalues \cite{Clarke1990}.  These results highlight the practical applicability and scalability of distributed optimization techniques for enhancing the performance and resilience of large-scale networked systems.

The remainder of this paper is structured as follows. In \cref{sec: Preliminary}, we present motivating examples and introduce the problem formulation. \cref{sec: Node weights} develops the main distributed optimization framework for node-weighted Laplacian matrices, including algorithmic details, distributed implementation, and simulation results. Extensions to edge weight optimization, technical considerations such as non-simple eigenvalues, robustness to network changes, and potential improvements are discussed in \cref{sec: Discussion and improvement}. The main contributions are summarized in \cref{sec: conclusions}. Supporting mathematical details and proofs are provided in the Appendix.

\section{Preliminary}
\label{sec: Preliminary}
\textbf{Notation:} $\|\cdot\|$ denotes the Euclidean (2-)norm, and $\|\cdot\|_\infty$ denotes the infinity norm for vectors. The ceiling function $\lceil x\rceil$ returns the smallest integer greater than or equal to $x$. $\operatorname{diag}(\cdot)$ denotes a diagonal matrix with the given vector on its diagonal. $\mathbb{R}$ denotes the set of real numbers, and $\mathbf{1}_N$ denotes the $N$-dimensional vector of all ones. The range space of a matrix $A$ is denoted by ${Range}(A)$, and the null space by ${Null}(A)$. $[\cdot]_i$ denotes the $i$th entry of a vector.

\subsection{Graph Theory}

A directed graph $\mathcal{G}$ is defined as an ordered pair $\left(\mathcal{V}, \mathcal{E}\right)$, where $\mathcal{V}$ represents the set of vertices, and $\mathcal{E} \subseteq \mathcal{V} \times \mathcal{V}$ denotes the set of edges. An edge from vertex $v_i$ to vertex $v_j$ is indicated by $(v_i, v_j) \in \mathcal{E}$. The neighborhood of a vertex $v_i$, denoted as $\mathcal{N}_i$, is the set of all vertices that have an edge directed towards $v_i$: $\mathcal{N}_i = \{v_j \in \mathcal{V} : (v_j, v_i) \in \mathcal{E}\}$. The number of the vertices is denoted by $N = \left|\mathcal{V}\right|$.  A graph is called undirected if, for every $(v_i, v_j) \in \mathcal{E}$, it also holds that $(v_j, v_i) \in \mathcal{E}$. The number of edges in an undirected graph is therefore even. The path between two vertices $v_i$ and $v_j$ is a sequence of vertices $v_{i_1}, v_{i_2}, \ldots, v_{i_k}$ such that $v_{i_1} = v_i$, $v_{i_k} = v_j$, and $(v_{i_{\ell}}, v_{i_{\ell+1}}) \in \mathcal{E}$ for all $\ell = 1, \ldots, k-1$. An undirected graph is called connected if there exists a path between any two vertices. The distance between two vertices $v_i$ and $v_j$ is defined as the fewest number of edges needed for a path connecting them, denoted as $\operatorname{dist}(v_i, v_j)$. The graph diameter of an undirected graph, denoted by $d(\mathcal{G})$, is defined as the maximum distance between any two vertices, i.e., $\displaystyle d(\mathcal{G}) = \max_{v_1, v_2 \in \mathcal{V}} \operatorname{dist}(v_1, v_2)$. The (in) adjacency matrix $\mathcal{A} = [a_{ij}] \in \mathbb{R}^{N \times N}$ of a graph $\mathcal{G}$ is a (0,1)-matrix, where $a_{ij} = 1$ if $(v_i, v_j) \in \mathcal{E}$, and $a_{ij} = 0$ otherwise. The (in) degree matrix, denoted by $\mathcal{D}$, is a diagonal matrix $\mathcal{D} = \operatorname{diag}(d_1, \ldots, d_N)$, where $d_i = \sum_{j=1}^{N} a_{ij}$. The (in) Laplacian matrix $\mathcal{L} \in \mathbb{R}^{N \times N}$ associated with graph $\mathcal{G}$, is defined as $\mathcal{L} = \mathcal{D} - \mathcal{A}$.
\subsection{Motivation and Problem Statement}
\label{sec: Motivation and Problem Statement}
We first present two motivating examples to show the importance of the finite condition number. Throughout the paper, the following assumption is imposed to restrict the eigenvalues of the Laplacian matrix to be real.
\begin{assumption}
    \label{assumption: undirected connected graph}
    The graph $\mathcal{G}$ is undirected and connected.
\end{assumption}

\subsubsection{Motivating Example: Average Consensus in MAS}
\label{subsubsec: Average Consensus in MAS}
In MAS, the diffusion dynamics or average consensus \cite{Tran2021,Jadbabaie2003,Gomez2013} of the agents can be modeled as
\begin{gather}
    \label{eq: discrete-time average consensus}
    x(k+1)-x(k)=-\frac{1}{r}\mathcal{L} x(k),
\end{gather}
where $r\in\mathbb{R}_+$ is the stepsize. The goal is to pick a proper $r\geq \underline{r}$, with $\underline{r}$ being the lower bound, such that the following consensus condition holds:
$$x(k)\to \frac{1}{N}\mathbf{1}_N\mathbf{1}_N^Tx(0).$$
Various choices of $r$ have been studied \cite{Zhang2011,Horn2012,Merris1998}, e.g., $r=N$ or $r=\displaystyle\max_{i=1,\cdots,N}\begin{vmatrix}
        d_i
    \end{vmatrix}$, where $d_i$ is the degree of node $i$. We are interested in finding the optimal $r$ that leads to the fastest convergence rate of \cref{eq: discrete-time average consensus}:
\begin{gather*}
    \rho^*=\min_{r\in\mathbb{R}_+}\rho\left(
    I-\frac{1}{r}\mathcal{L}-\frac{1}{N}\mathbf{1}_N\mathbf{1}_N^T
    \right),
\end{gather*}
which is equivalent to the following problem:
\begin{gather}
    \label{eq: min max problem for average consensus}
    \min_{r\in\mathbb{R}_+} \max_{i\in \left\{
        2,\cdots, N
        \right\}}\begin{vmatrix}
        1-\frac{1}{r} \lambda_i(\mathcal{L})
    \end{vmatrix},
\end{gather}
where $\lambda_i(\mathcal{L})$ is the $i$th eigenvalue of $\mathcal{L}$ satisfying $0=\begin{vmatrix}
        \lambda_1(\mathcal{L})
    \end{vmatrix}< \begin{vmatrix}
        \lambda_2(\mathcal{L})
    \end{vmatrix}\leq \cdots \leq \begin{vmatrix}
        \lambda_N(\mathcal{L})
    \end{vmatrix}$. With \cref{assumption: undirected connected graph}, the optimal stepsize $r$ can be obtained (The proof is given in \cref{sec: Optimization Approaches.1a}):
\begin{gather}
    \label{eq: optimal stepsize for real average consensus}
    r^*=\frac{\lambda_N(\mathcal{L})+\lambda_2(\mathcal{L})}{2},\ \rho^*=\frac{\lambda_N(\mathcal{L})-\lambda_2(\mathcal{L})}{\lambda_N(\mathcal{L})+\lambda_2(\mathcal{L})}=1-\frac{2}{1+\frac{\lambda_N(\mathcal{L})}{\lambda_2(\mathcal{L})}}.
\end{gather}

The larger the finite condition number is, the slower the convergence rate becomes.
\subsubsection{Dynamic Output Feedback Synthesis in MAS}
\label{subsubsec: Dynamic Output Feedback Synthesis in MAS}
The synthesis of dynamic output feedback controllers is an important problem in multi-agent systems (MAS), as it provides a general framework for achieving desired closed-loop performance. Consider a network of homogeneous agents, each governed by the following dynamics:
\begingroup
\arraycolsep=0pt
\begin{equation}
    \label{agent dynamic}
    \begin{array}{rcrcr}
        x_i(k+1) = & A   & x_i(k) & +B_1  \xi_i(k)  & +B_2u_i(k), \\
        y_i(k)   = & C_2 & x_i(k) & +D    \xi_i(k),
    \end{array}
\end{equation}
\endgroup
where $x_i$ is the states, $y_i$ is the output, $\xi_i$ is the exogenous disturbance, and $u_i$ is the control input of agent $i$. Denote the relative error as $\varepsilon_i=x_i-\frac{1}{N}\sum_{j=1}^N x_j$, and the measurement error is $z_i=C_1\varepsilon_i$. A typical disturbance $\xi$ to general output measurement $z$ attenuation problem uses $l_2$ norm, and aims to design a distributed controller for all $i\in\{1,\cdots, N\}$:
\begin{gather}
    \label{eq: Generalized modified DOF with network difference signal}
    \begin{aligned}
        {x}_{ci}(k+1)= & A_c{x}_{ci}(k) +B_c\sum_{j\in\mathcal{N}_i}y_i(k)-y_j(k), \\
        u_i(k)=        & C_c{x}_{ci}(k)+D_c\sum_{j\in\mathcal{N}_i}y_i(k)-y_j(k),
    \end{aligned}
\end{gather}
such that \begin{inlineenum}
    \item $\lim_{k\to\infty} \varepsilon_i(k) = 0$ with zero $\xi(0)$,
    \item and $\|z(k)\|_2\leq \gamma \|\xi(k)\|_2$ with zero initial condition.
\end{inlineenum}
As shown in \cite{Xu2024a}, this entials to solving the following LMIs, in which $\lambda_i$ is taken at two points $\lambda_i=\lambda_2(\mathcal{L})$ and $\lambda_N(\mathcal{L})$:
\begin{gather}
    \label{CL H infty optimization}
    \begin{bmatrix}
        Y                     & *         & *                    & *   & * & *        \\
        I_n                   & X         & *                    & *   & * & *        \\
        0                     & 0         & \gamma I             & *   & * & *        \\
        YA+\lambda_iW_oC_2    & L         & YB_1+\lambda_iW_oD   & Y   & * & *        \\
        A+\lambda_i B_2D_cC_2 & AX+B_2W_c & B_1+\lambda_iB_2D_cD & I_n & X & *        \\
        C_1                   & C_1X      & 0                    & 0   & 0 & \gamma I
    \end{bmatrix}
    \succ0,
\end{gather}
where $X,Y,W_o,W_c,D_c,L,\gamma$ are the decision variables. Through a simple scaling of the decision variables:
$$W_o,D_c \to \frac{1}{\lambda_2(\mathcal{L})}W_o,\frac{1}{\lambda_2(\mathcal{L})}D_c, $$
the $l_2$ gain is smaller than $\gamma$ if \cref{CL H infty optimization} can be solved at $\lambda_i=1$ and $\frac{\lambda_N(\mathcal{L})}{\lambda_2(\mathcal{L})}$.
When $\lambda_2(\mathcal{L})=\lambda_N(\mathcal{L})$, \cref{CL H infty optimization} becomes the key inequality associated with a single-agent problem, which is guaranteed to be feasible for some finite $\gamma$ under traditional controllability and observability assumptions \cite{Masubuchi1998,Scherer1996}. However, for a general graph, $\lambda_2(\mathcal{L})\neq \lambda_N(\mathcal{L})$, and the ratio between the two eigenvalues is a key factor in the synthesis problem. A lower finite condition number leads to a larger feasible region for the decision variables, which results in better attenuation performance of the closed-loop system. Therefore, the optimization of the weights on the edges or nodes will lead to a better performance of the MAS.

\subsubsection{Problem statement}
Often, for unweighted graphs, the node (or edge) weights are assumed to be one. One can relax such restriction by assigning (non-negative) weights to the nodes or edges to lower the finite condition number, which improves the system behavior. The weights are expected to be optimized to achieve:
\begin{gather}
    \label{eq: finite conditional number problem for weighted Laplacian}
    \min_{{\mathbf{w}}} \frac{\lambda_N({\mathcal{L}}_{{\mathbf{w}}})}{\lambda_2({\mathcal{L}}_{{\mathbf{w}}})},
\end{gather}
where $\mathbf{w}$ is the weights and $\mathcal{L}_\mathbf{w}$ is the weighted Laplacian matrix. Furthermore, to meet the practical requirement of the applications, the algorithm developed in this paper is expected to be fully distributed. The main goal of this paper is to develop an algorithm that
\begin{inlineenum}
    \item converges to the minimum finite condition number
    \item is fully distributed
    \item requires minimum information regarding graph configuration
\end{inlineenum}

\subsection{Optimizing Node Weights}
\label{subsubsec: Adjusting the Weights on the Nodes}
Tuning edge weights have been studied in the literature \cite{Tam2020,Ogiwara2017,Nagarajan2015,Somisetty2025,Tavasoli2024} for a variety of problems. An alternative and less explored approach is to minimize the finite condition number of the Laplacian matrix by adjusting node weights. This node-weighted approach offers a significant advantage as the number of nodes, $N$, is typically much smaller than the number of edges, $M$, resulting in fewer variables to optimize. To be specific, each node $i$ is assigned a single non-negative weight ${w}_i\geq0$, which is uniformly applied to all incoming information from neighboring agents. The associated node weighted Laplacian matrix is then
\begin{gather}
    \label{eq: node weighted Laplacian matrix}
    {\mathcal{L}}_{\mathbf{w}}=\diag(\mathbf{w})\mathcal{L},
\end{gather}
where $\mathbf{w}\in\mathbb{R}^N_+$. Note that $\mathbf{1}_N$ as the eigenvector corresponding to the zero eigenvalue, i.e., $\mathbf{1}_N\in Null\left(
    \mathcal{L}_{\mathbf{w}}
    \right)$. In practice, each node can simply multiply the information it receives from it neighbors by its assigned weight, before using the information,making node-weighting straightforward to implement.The introduction of $\diag({\mathbf{w}})$ destroys the symmetry of the Laplacian matrix. The following lemma demonstrates all eigenvalues $\lambda({\mathcal{L}}_{{\mathbf{w}}})$ are real.

\begin{lemma}
    \label{lemma: node weighted Laplacian matrix}
    Suppose the node weight $\mathbf{w}\in\mathbb{R}^N$ is entry-wise positive, then the eigenvalues of matrix ${\mathcal{L}}_{{\mathbf{w}}}$ are all real. Furthermore, suppose \cref{assumption: undirected connected graph} holds, then the second-smallest eigenvalue of ${\mathcal{L}}_{{\mathbf{w}}}$ is positive.
\end{lemma}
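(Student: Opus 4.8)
The plan is to exploit the fact that although $\mathcal{L}_{\mathbf{w}} = \diag(\mathbf{w})\mathcal{L}$ is not symmetric, it is similar to a symmetric matrix via a positive diagonal scaling. Write $W = \diag(\mathbf{w})$ and $W^{1/2} = \diag(\sqrt{w_1},\ldots,\sqrt{w_N})$, which is well-defined and invertible because every $w_i > 0$. First I would form the similarity transformation
\[
W^{-1/2}\mathcal{L}_{\mathbf{w}}W^{1/2} = W^{-1/2}W\mathcal{L}W^{1/2} = W^{1/2}\mathcal{L}W^{1/2} =: S.
\]
Since $\mathcal{L}$ is symmetric for an undirected graph and $W^{1/2}$ is diagonal, $S^T = W^{1/2}\mathcal{L}^T W^{1/2} = S$, so $S$ is symmetric. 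Similar matrices share the same spectrum, and a real symmetric matrix has only real eigenvalues; this establishes the first claim that all eigenvalues of $\mathcal{L}_{\mathbf{w}}$ are real.

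For the second claim I would combine positive semidefiniteness with a null-space count. For any $x \in \mathbb{R}^N$ we have $x^T S x = (W^{1/2}x)^T \mathcal{L}(W^{1/2}x) \geq 0$ because $\mathcal{L}$ is positive semidefinite, so $S$ is positive semidefinite and hence $\mathcal{L}_{\mathbf{w}}$ has only non-negative eigenvalues. It remains to show the zero eigenvalue is simple. Under \cref{assumption: undirected connected graph} the graph is connected, so $Null(\mathcal{L}) = \mathrm{span}(\mathbf{1}_N)$ is one-dimensional. Because $W^{1/2}$ is invertible, $x^T S x = 0$ iff $W^{1/2}x \in Null(\mathcal{L})$, i.e.\ iff $x \in \mathrm{span}(W^{-1/2}\mathbf{1}_N)$; thus $Null(S)$ is one-dimensional as well. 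Equivalently, $Null(\mathcal{L}_{\mathbf{w}}) = Null(\mathcal{L})$ since $W$ is invertible.

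Finally I would upgrade this geometric-multiplicity statement to an algebraic one. Since $S$ is symmetric it is diagonalizable, and so is $\mathcal{L}_{\mathbf{w}}$ (being similar to $S$); for diagonalizable matrices the algebraic and geometric multiplicities of every eigenvalue coincide. Hence the eigenvalue $0$ has algebraic multiplicity exactly one, and every remaining eigenvalue is strictly positive. Ordering the eigenvalues as $0 = \lambda_1(\mathcal{L}_{\mathbf{w}}) < \lambda_2(\mathcal{L}_{\mathbf{w}}) \leq \cdots \leq \lambda_N(\mathcal{L}_{\mathbf{w}})$, the second-smallest eigenvalue $\lambda_2(\mathcal{L}_{\mathbf{w}})$ is therefore positive.

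The main obstacle is identifying the correct symmetrizing transformation; once $S = W^{1/2}\mathcal{L}W^{1/2}$ is in hand, reality, nonnegativity, and simplicity of the zero eigenvalue all follow from standard properties of symmetric positive semidefinite matrices and connected-graph Laplacians. A subtle but essential point is that a one-dimensional null space together with non-negative eigenvalues does not by itself force $\lambda_2 > 0$ without diagonalizability (a priori $0$ could be a defective eigenvalue); the similarity to the symmetric matrix $S$ is exactly what rules this out.
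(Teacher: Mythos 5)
Your proof is correct and takes essentially the same route as the paper: both rest on the similarity $\diag(\mathbf{w})^{-1/2}\,\mathcal{L}_{\mathbf{w}}\,\diag(\mathbf{w})^{1/2}=\diag(\mathbf{w})^{1/2}\,\mathcal{L}\,\diag(\mathbf{w})^{1/2}$, which is exactly the paper's symmetric matrix $\hat{\mathcal{L}}_{\mathbf{w}}$ in \cref{eq: node weighted Laplacian matrix 2}. The only difference is that the paper obtains the inertia (no negative eigenvalues, a simple zero) in one stroke by noting $\hat{\mathcal{L}}_{\mathbf{w}}$ is congruent to $\mathcal{L}$ and invoking Sylvester's law of inertia, whereas you re-derive the same facts by hand through the quadratic form, the null-space count, and the diagonalizability remark --- a slightly longer but equally valid (and self-contained) argument.
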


Consider the following matrix which is congruent to $\mathcal{L}$, thus it has no negative eigenvalue and one zero eigenvalue:
\begin{gather}
    \label{eq: node weighted Laplacian matrix 2}
    \hat{\mathcal{L}}_\mathbf{w}=\diag({\mathbf{w}})^{\frac{1}{2}}\mathcal{L}\diag({\mathbf{w}})^{\frac{1}{2}}.
\end{gather}
$\hat{\mathcal{L}}_\mathbf{w}$ is similar to $\mathcal{L}_{\mathbf{w}}$ with the change of the basis matrix $\diag({\mathbf{w}})^{\frac{1}{2}}$ establishing $\mathcal{L}_\mathbf{w}$ having all real positive eigenvalues, with the exception of a zero eigenvalue. Note the null space $Null\left(
    \hat{\mathcal{L}}_\mathbf{w}
    \right)=span\{
    \diag\{\mathbf{w}\}^{-\frac{1}{2}}\mathbf{1}_N
    \}$ and the range space of $\hat{\mathcal{L}}_\mathbf{w}$ is
\begin{gather}
    \label{eq: range space of the node weighted Laplacian matrix}
    \mathcal{S}:=Range\left(
    \hat{\mathcal{L}}_\mathbf{w}
    \right)=\left\{
    v\in \mathbb{R}^{N} \mid v^T \diag(\mathbf{w})^{-\frac{1}{2}}\mathbf{1}_N=0
    \right\}.
\end{gather}
\begin{remark}
    With \cref{lemma: node weighted Laplacian matrix}, \cref{eq: finite conditional number problem for weighted Laplacian} is well posed as the loss function becomes real.
    Since $\hat{\mathcal{L}}_{\mathbf{w}}$ exhibits better numerical properties than $\mathcal{L}_{\mathbf{w}}$, $\hat{\mathcal{L}}_{\mathbf{w}}$ is used to replace $\mathcal{L}_{\mathbf{w}}$ in \cref{eq: finite conditional number problem for weighted Laplacian},which as demonstrated later, does not affect the distributed nature of the algorithm.

    The finite condition number in \cref{eq: finite conditional number problem for weighted Laplacian} is, only quasi convex. For a given $\mathbf{w}\in\mathbb{R}^N_+$, scalar multiplication by any $\alpha>0$ preserves the ratio, as it scales both the Fiedler value and the largest eigenvalue. Consequently, there exist infinitely many optimal points for \cref{eq: finite conditional number problem for weighted Laplacian}, differing only by a scalar factor. Therefore, without loss of generality, an extra constraint is posed on the weights to force the uniqueness of the optimal solution and to ensure that the graph remains connected. For numerical stability, the Fiedler value is chosen to be lower bounded by $1$. If a different lower bound other from $1$ is chosen, the optimization problem will lead to a similar optimal solution which differs only by a scalar factor.

    \begin{gather}
        \label{eq: modified finite conditional number problem for node weighted Laplacian}
        \begin{aligned}
            \min_{\hat{\mathbf{w}}\in\mathbb{R}^N} & \quad \lambda_N(\hat{\mathcal{L}}_{{\mathbf{w}}}), \\
            \text{s.t.}                            & \quad
            \left\{
            \begin{array}{l}
                1-\lambda_2(\hat{\mathcal{L}}_{{\mathbf{w}}})\leq 0, \\
                {\mathbf{w}}\geq 0.
            \end{array}
            \right.
        \end{aligned}
    \end{gather}
\end{remark}

\begin{lemma}
    \label{lemma: convexity of the optimization problem}
    The optimization problem \cref{eq: modified finite conditional number problem for node weighted Laplacian} is a convex problem.
\end{lemma}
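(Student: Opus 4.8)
The plan is to split the convexity claim into two independent pieces: that the objective $\lambda_N(\hat{\mathcal L}_{\mathbf w})$ is a convex function of the weight vector $\mathbf w$, and that the feasible set $\{\mathbf w\ge 0:\ \lambda_2(\hat{\mathcal L}_{\mathbf w})\ge 1\}$ is convex. Since $\{\mathbf w\ge 0\}$ is trivially convex, everything reduces to the spectral dependence of $\hat{\mathcal L}_{\mathbf w}=\diag(\mathbf w)^{1/2}\mathcal L\diag(\mathbf w)^{1/2}$ on $\mathbf w$. The obstruction to applying the textbook ``largest/smallest eigenvalue of an affine matrix is convex/concave'' arguments is twofold: $\hat{\mathcal L}_{\mathbf w}$ is \emph{not} affine in $\mathbf w$ (its off-diagonal entries carry $\sqrt{w_iw_j}$), and its null vector $\diag(\mathbf w)^{-1/2}\mathbf 1_N$ \emph{moves} with $\mathbf w$. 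Removing both features simultaneously is the heart of the argument.

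The device I would introduce is the companion matrix
\[
  \tilde{\mathcal L}_{\mathbf w}:=\mathcal L^{1/2}\diag(\mathbf w)\mathcal L^{1/2},
\]
with $\mathcal L^{1/2}$ the symmetric PSD square root of $\mathcal L$. Writing $A=\diag(\mathbf w)^{1/2}\mathcal L^{1/2}$, we have $\hat{\mathcal L}_{\mathbf w}=AA^{T}$ and $\tilde{\mathcal L}_{\mathbf w}=A^{T}A$; for a square $A$ these share the same characteristic polynomial, so all eigenvalues coincide and in particular $\lambda_N(\hat{\mathcal L}_{\mathbf w})=\lambda_N(\tilde{\mathcal L}_{\mathbf w})$, $\lambda_2(\hat{\mathcal L}_{\mathbf w})=\lambda_2(\tilde{\mathcal L}_{\mathbf w})$. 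The payoff is that $\tilde{\mathcal L}_{\mathbf w}=\sum_i w_i\,(\mathcal L^{1/2}e_i)(\mathcal L^{1/2}e_i)^{T}$ is symmetric and \emph{affine} in $\mathbf w$, and its null space is the \emph{fixed} line $\operatorname{span}(\mathbf 1_N)$, independent of $\mathbf w$: indeed $v^{T}\tilde{\mathcal L}_{\mathbf w}v=\|\diag(\mathbf w)^{1/2}\mathcal L^{1/2}v\|^{2}=0$ forces $\mathcal L^{1/2}v=0$, i.e.\ $v\in\operatorname{span}(\mathbf 1_N)$, using $\diag(\mathbf w)\succ0$ together with \cref{assumption: undirected connected graph}.

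With this representative the two claims become elementary. For the objective, $\lambda_N(\tilde{\mathcal L}_{\mathbf w})=\max_{\|v\|=1} v^{T}\tilde{\mathcal L}_{\mathbf w}v$ and each $v^{T}\tilde{\mathcal L}_{\mathbf w}v=\sum_i w_i(\mathcal L^{1/2}v)_i^{2}$ is affine (indeed linear) in $\mathbf w$, so $\lambda_N$ is a pointwise maximum of affine functions and hence convex. For the constraint, \cref{lemma: node weighted Laplacian matrix} and connectedness make the zero eigenvalue simple with the fixed eigenvector $\mathbf 1_N$, so Courant--Fischer gives $\lambda_2(\tilde{\mathcal L}_{\mathbf w})=\min_{v\perp\mathbf 1_N,\ \|v\|=1} v^{T}\tilde{\mathcal L}_{\mathbf w}v$, a pointwise \emph{minimum} over the \emph{fixed} index set $\{v\perp\mathbf 1_N,\ \|v\|=1\}$ of affine functions, hence concave in $\mathbf w$. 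Therefore $\{\mathbf w:\lambda_2(\hat{\mathcal L}_{\mathbf w})\ge 1\}$ is a superlevel set of a concave function, so it is convex, and its intersection with $\{\mathbf w\ge 0\}$ stays convex. Minimizing the convex objective over this convex set is a convex program.

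I expect the genuine obstacle to be the conceptual step that drives the whole proof: realizing that the non-affine dependence and the $\mathbf w$-dependent null space of $\hat{\mathcal L}_{\mathbf w}$ can both be eliminated by passing to the spectrally identical, affine, fixed-null-space matrix $\tilde{\mathcal L}_{\mathbf w}$. Once that substitution is in hand, the only remaining care is the routine verification that $AA^{T}$ and $A^{T}A$ (square $A$) have identical characteristic polynomials, so that the labels $\lambda_2$ and $\lambda_N$ transfer verbatim from $\hat{\mathcal L}_{\mathbf w}$ to $\tilde{\mathcal L}_{\mathbf w}$.
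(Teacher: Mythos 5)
Your proof is correct, and its engine is the same one the paper uses: both arguments pass from $\hat{\mathcal L}_{\mathbf w}$ to a spectrally equivalent matrix that is \emph{affine} in $\mathbf w$ by swapping factors (the $AA^{T}$ versus $A^{T}A$ identity). The difference lies in which factor is swapped, and hence in how the zero eigenvalue is disposed of. The paper takes the thin Cholesky factor $\mathcal L=CC^{T}$ with $C\in\mathbb{R}^{N\times(N-1)}$, so the swapped matrix $C^{T}\diag(\mathbf w)C$ lives in dimension $N-1$ and the zero eigenvalue is deleted automatically; the labels transfer as $\lambda_2(\hat{\mathcal L}_{\mathbf w})=\lambda_{\min}\left(C^{T}\diag(\mathbf w)C\right)$ (concave) and $\lambda_N(\hat{\mathcal L}_{\mathbf w})=\lambda_{\max}\left(C^{T}\diag(\mathbf w)C\right)$ (convex) by the standard extreme-eigenvalue results, with no null space to track. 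You instead keep the square symmetric root $\mathcal L^{1/2}$, so your $\tilde{\mathcal L}_{\mathbf w}=\mathcal L^{1/2}\diag(\mathbf w)\mathcal L^{1/2}$ retains the zero eigenvalue but with the \emph{fixed} eigenvector $\mathbf 1_N$, and you remove it by Courant--Fischer over the fixed subspace $\mathbf 1_N^{\perp}$. Both routes are valid: the paper's is slightly leaner since the label shift is automatic, while yours avoids factorizing the singular $\mathcal L$ into a rectangular factor and makes the $\mathbf w$-independent null direction explicit, which is exactly what legitimizes the fixed index set in your minimum. One small polish: your justification of $\lambda_2(\tilde{\mathcal L}_{\mathbf w})=\min_{v\perp\mathbf 1_N,\,\|v\|=1}v^{T}\tilde{\mathcal L}_{\mathbf w}v$ appeals to simplicity of the zero eigenvalue via \cref{lemma: node weighted Laplacian matrix}, which assumes $\mathbf w>0$ entrywise, whereas the feasible set allows $w_i=0$; but the identity needs only that $\tilde{\mathcal L}_{\mathbf w}\succeq 0$ and $\tilde{\mathcal L}_{\mathbf w}\mathbf 1_N=0$ (so $\mathbf 1_N^{\perp}$ is an invariant subspace), which holds for every $\mathbf w\geq 0$ because $\mathcal L^{1/2}\mathbf 1_N=0$ -- so concavity extends to the closed orthant and nothing in your argument breaks.
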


\begin{proof}
    Denote the Cholesky decomposition of $\mathcal{L}$ as $\mathcal{L}=CC^T$. Because of \cref{assumption: undirected connected graph}, $\mathcal{L}$ is of rank $N-1$, which results to $C\in\mathbb{R}^{N\times (N-1)}$. Since $\hat{\mathcal{L}}_{\mathbf{w}}\succeq 0$, it holds that:
    \begin{gather}
        \label{eq: equivalence of eigenvalue}
        \left.
        \begin{array}{r}
            \diag({\mathbf{w}})^{\frac{1}{2}}CC^T\diag({\mathbf{w}})^{\frac{1}{2}}\succeq 0 \\
            \sigma\left[
            C^T\diag({\mathbf{w}})^{\frac{1}{2}}
            \right]=\sigma\left[
            \diag({\mathbf{w}})^{\frac{1}{2}}C
            \right]
        \end{array}
        \right\}\Rightarrow \lambda_{i+1}\left[
        \diag({\mathbf{w}})^{\frac{1}{2}}CC^T\diag({\mathbf{w}})^{\frac{1}{2}}
        \right]=\lambda_{i}\left[
            C^T\diag({\mathbf{w}})C
            \right],\ \forall i=1,\cdots,N-1,
    \end{gather}
    with the notation that $\lambda_i$ is the $i$th smallest eigenvalue.
    \cref{eq: equivalence of eigenvalue} indicates that the non-zero eigenvalues of the matrix $\hat{\mathcal{L}}_{\mathbf{w}}$ are the same as the eigenvalues of the matrix $C\diag{\mathbf{w}}C^T\in\mathbb{R}^{(N-1)\times (N-1)}$, with the zero eigenvalue being dropped. Because of the concavity of the smallest eigenvalue function in a positive semidefinite matrix, $\lambda_{2}(\hat{\mathcal{L}}_{\mathbf{w}})=\lambda_{1}\left(
        C\diag{\mathbf{w}}C^T
        \right)$ is concave. Similarly, $\lambda_{N}(\hat{\mathcal{L}}_{\mathbf{w}})=\lambda_{N-1}\left(
        C\diag{\mathbf{w}}C^T
        \right)$ is convex. Therefore, \cref{eq: modified finite conditional number problem for node weighted Laplacian} is a convex optimization problem.
\end{proof}

The optimization problem in \cref{eq: modified finite conditional number problem for node weighted Laplacian} can be equivalently formulated centrally, as an LMI problem, as outlined in \cref{subsec: LMI formulation}. While LMI-based approaches are typically centralized and thus not the focus of this work, the LMI formulation is included for completeness and to provide a benchmark for evaluating the performance of the proposed distributed algorithm. For solving \cref{eq: modified finite conditional number problem for node weighted Laplacian} distributedly, one can apply the augmented Lagrangian method to obtain an iterative algorithm (\cref{subsec: Augmented Lagrangian Method}). Consider the following augmented Lagrangian:
\begin{gather}
    \label{eq: Lagrangian function for node weighted Laplacian}
    {\mathscr{L}}_{{\rho}}({\mathbf{w}},{\sigma})=
    \lambda_N(\hat{\mathcal{L}}_{{\mathbf{w}}})+\frac{{\rho}}{2}\left[
        {\max}\left\{
        0,1-\lambda_2\left(
        \hat{\mathcal{L}}_{{\mathbf{w}}}
        \right)+\frac{{\sigma}}{{\rho}}
        \right\}
        \right]^2,
\end{gather}
where ${\sigma}\in\mathbb{R}$ is the Lagrange multiplier and ${\rho}$ is the penalty parameter. From \cref{lemma: convexity of the optimization problem}, both $\lambda_N(\hat{\mathcal{L}}_\mathbf{w})$ and $-\lambda_2(\hat{\mathcal{L}}_\mathbf{w})$ are convex functions in $\mathbf{w}$. Taking point-wise maximum with $0$ and function $1-\lambda_2(\hat{\mathcal{L}}_\mathbf{w})+\frac{\sigma}{\rho}$ indicates that $\left[
        {\max}\left\{
        0,1-\lambda_2\left(
        \hat{\mathcal{L}}_{{\mathbf{w}}}
        \right)+\frac{{\sigma}}{{\rho}}
        \right\}
        \right]$ is convex in $\mathbf{w}$. The square function is convex and non-decreasing on $[0,\infty]$, therefore the composite function $\left[
        {\max}\left\{
        0,1-\lambda_2\left(
        \hat{\mathcal{L}}_{{\mathbf{w}}}
        \right)+\frac{{\sigma}}{{\rho}}
        \right\}
        \right]^2$ is also convex in $\mathbf{w}$, which establishes $\mathscr{L}_{{\rho}}({\mathbf{w}},{\sigma})$ is convex in $\mathbf{w}$ for any $\sigma\geq 0,\rho>0$.

The associated iterative algorithm for solving \cref{eq: modified finite conditional number problem for node weighted Laplacian} is:
\begin{subequations}
    \label{eq: overall iterative algorithm for node weighted Laplacian}
    \begin{align}
        \label{eq: overall iterative algorithm for node weighted Laplacian 1}
        {\mathbf{w}}^{k+1}= & \argmin_{{\mathbf{w}}\geq 0}{\mathscr{L}}_{{\rho}}({\mathbf{w}},{\sigma}^k), \\
        \label{eq: overall iterative algorithm for node weighted Laplacian 2}
        {\sigma}^{k+1}=     & \max\left\{
        {\sigma}^k+{\rho}\left[
            1-\lambda_2\left(
            \hat{\mathcal{L}}_{{\mathbf{w}}^{k+1}}
            \right)
            \right],0
        \right\}.
    \end{align}
\end{subequations}

Note that the main step \cref{eq: overall iterative algorithm for node weighted Laplacian 1} is an optimization problem that can be solved using the projected gradient descent method (See \cref{subsec: Projected Gradient Descent}).

\section{Distributed Node Weights optimization}
\label{sec: Node weights}
In this section, we consider the optimization of the node weights as stated in \cref{eq: modified finite conditional number problem for node weighted Laplacian}. A fully distributed optimization algorithm is developed to realize the iterative algorithm presented in \cref{eq: overall iterative algorithm for node weighted Laplacian}. First, an overview of the process is given in the form of a flowchart. Then, the detailed algorithm is presented in the subsequent subsection.
\subsection{Optimization Process Overview}
Algorithm \cref{eq: overall iterative algorithm for node weighted Laplacian} incorporates three optimization iterations. Step \cref{eq: overall iterative algorithm for node weighted Laplacian 1} solves a subproblem dependent on a fixed $\sigma^k$, from prior iteration, while step \cref{eq: overall iterative algorithm for node weighted Laplacian 2} updates $\sigma^k$ based on the result of the solved sub-optimal point $\mathbf{w}^{k+1}$. Step \cref{eq: overall iterative algorithm for node weighted Laplacian 2} is simple, as $\lambda_2(\hat{\mathcal{L}}_{\mathbf{w}^{k+1}})$ is obtained during \cref{eq: overall iterative algorithm for node weighted Laplacian 1} iteration. The key computational step is \cref{eq: overall iterative algorithm for node weighted Laplacian 1}, which is obtained via projected gradient descent, denoted here as `outer iteration'. Time step $t$ is used to represent the iteration number of this outer iteration. However, as shown below, to obtain updated values for $\mathbf{w}$ at each iteration, the gradient of the augmented Lagrangian function is required. The gradient is a function of the eigenvectors of the weighted Laplacian matrix, which depend on the weights. As a result, after each update of $\mathbf{w}$, between $t$ and $t+1$, new eigenpairs for $\lambda_2(\hat{\mathcal{L}}_{\mathbf{w}(t)})$ and $\lambda_N(\hat{\mathcal{L}}_{\mathbf{w}(t)})$ need to obtained based on the updated weights. This is referred to as the inner iteration. Time step $T$ is initialized to be $0$ after each time $t$ becomes $t+1$. The detailed layout is given in the following, in which all steps can be accomplished in a distributed fashion.

\begin{itemize}
    \begin{minipage}{.49\linewidth}
        \item Augmented Lagrangian updates for \cref{eq: Lagrangian function for node weighted Laplacian} using \cref{eq: overall iterative algorithm for node weighted Laplacian}
    \end{minipage}
    \begin{minipage}{.29\linewidth}
        $\Leftarrow$ Augmented Lagrangian Update
    \end{minipage}
    \begin{minipage}{.1\linewidth}
        \centering
        $k\gets k+1$
    \end{minipage}

    \begin{itemize}
        \begin{minipage}{.49\linewidth}
            \item[-] Gradient descent iteration for \cref{eq: distributed node weight update}
        \end{minipage}
        \begin{minipage}{.29\linewidth}
            $\Leftarrow$ Outer iteration
        \end{minipage}
        \begin{minipage}{.1\linewidth}
            \centering
            $t\gets t+1$
        \end{minipage}
        \vspace{2mm}

        \begin{minipage}{.49\linewidth}
            \begin{itemize}
                \item Power iteration for the largest and Fielder eigenpairs using \cref{eq: central distributed power iteration using infty norm,eq: central Fiedler eigen-pair estimator}
                \item Normalization of the eigenvectors using \cref{Normalization procedure}
            \end{itemize}
        \end{minipage}
        \begin{minipage}{.29\linewidth}
            $\Leftarrow$ Inner iteration
        \end{minipage}
        \begin{minipage}{.1\linewidth}
            \centering
            $T\gets T+1$
        \end{minipage}
    \end{itemize}
\end{itemize}

For better understanding, the overall process is also depicted in \cref{fig: Node weight Overall Process}. The inner iteration is shown in the dotted box, the outer iteration in the solid box, and the Lagrangian update within the dashed-dot box. The feedback loop updates the largest eigenvalue of the weighted Laplacian matrix, while the update of $\sigma$ outside the outer iteration is used to initialize the next sub-optimal problem.

\begin{figure}[htbp]
    \vspace{-2mm}
    \scriptsize
    \centering
    \def\arrowOffSet{0.25\mywidth}
    \def\ratioOffSet{0.3}
    \begin{tikzpicture}[auto, node distance=1\mywidth,>=latex',scale=0.8]
        \node [block, align=center](Largest){Largest Eigen-pair\\Estimator};
        \node [block, align=center, left of = Largest, node distance = 3\mywidth](initialization){Set up ${\mathbf{w}}^0$\\Estimate $d({\mathcal{G}})$};
        \node [block, below of = Largest, align=center](Smallest){Fiedler Eigen-pair\\Estimator};
        \node [block, right of = Largest, align=center,yshift=-0.5\mywidth,node distance=3\mywidth](Normalization){\\\\Distributed\\\\Normalization\\\\};
        \node [block, below of = Smallest, node distance=1.85\mywidth, xshift = 1.5 \mywidth,align = center](weight update){\\\\Gradient Descent \\\\for \cref{eq: overall iterative algorithm for node weighted Laplacian 1}\\};
        \node [block, below of = weight update, node distance= 2\mywidth](sigma update){one step update of \cref{eq: overall iterative algorithm for node weighted Laplacian 2}};
        \node [input, left of = weight update, yshift=-\arrowOffSet, node distance =3\mywidth](feedback){};
        \path  let \p1 = (weight update.south), \p2 = (sigma update.north), \p3 = (Normalization) in node at ($(\x3,\ratioOffSet*\y1-\ratioOffSet*\y2+\y2)$) [block, align=center] (rho) {$\rho$};

        \draw [->] ($(initialization.east)+(0,0.1\mywidth)$) -- ($(Largest.west)+(0,0.1\mywidth)$);
        \draw [dashed,->] (Largest) -- node{${\lambda_N}(\hat{\mathcal{L}}_{{\mathbf{w}}})$} (Smallest);
        \draw [->] (Smallest) |-node[right,pos=0.2]{${\lambda_2}(\hat{\mathcal{L}}_{{\mathbf{w}}})$} ($(weight update.west)+(0,\arrowOffSet)$);
        \draw [->] (Normalization) |- node[right,pos=0.2]{$\bar{v},\underline{v}$} node[left,pos=0.2,align=right]{$\|\bar{v}\|=1$\\[1mm]$\|\underline{v}\|=1$} ($(weight update.east)+(0,\arrowOffSet)$);
        \draw [->] let \p1 = (Largest.east),\p2=(Normalization.west) in ($(\x1,\y1)$) -- node[above]{$\bar{v}$} node [below] {$\|\bar{v}\|_\infty=1$} ($(\x2,\y1)$);
        \draw [->] let \p1 = (Smallest.east),\p2=(Normalization.west) in ($(\x1,\y1)$) -- node[above]{$\underline{v}$} node [below] {$\|\underline{v}\|_\infty=1$} ($(\x2,\y1)$);
        \draw [-] let \p1 = (weight update.west), \p2 =(feedback) in ($(\x1,\y2)$)-- (feedback);
        \draw [->] (feedback) |- ($(Largest.west)-(0,0.1\mywidth)$);
        \draw [->] (rho) |- ($(weight update.east)+(0,-\arrowOffSet)$);
        \draw [->] (rho) |- (sigma update);
        \draw [->] ($(weight update.south)+(-\arrowOffSet,0)$) -- node [left, align = center, pos = 1-\ratioOffSet] {${\mathbf{w}}^{k+1}$\\when \cref{eq: overall iterative algorithm for node weighted Laplacian 1} meets \\ stopping criteria}($(sigma update.north)+(-\arrowOffSet,0)$);
        \draw [->] ($(sigma update.north)+(\arrowOffSet,0)$) -- node [right, align = center, pos = \ratioOffSet] {$\hat{\sigma}^{k+1}$} ($(weight update.south)+(\arrowOffSet,0)$);


        \node[fit=(Largest) (Normalization) (weight update) (feedback), thick, draw, inner xsep=0.4\mywidth, inner ysep=0.4\mywidth](inner fit) {};
        \node[fit=(Largest) (Normalization) (Smallest), dotted, draw, inner xsep=0.2\mywidth, inner ysep=0.2\mywidth](fit) {};
        \node [fit= (weight update) (sigma update), loosely dash dot, draw, inner xsep=\mywidth, inner ysep=0.2\mywidth](Lagrangian fit) {};
    \end{tikzpicture}
    \caption{Overall process for Node Weight Optimization}
    \label{fig: Node weight Overall Process}
    \vspace{-2mm}
\end{figure}
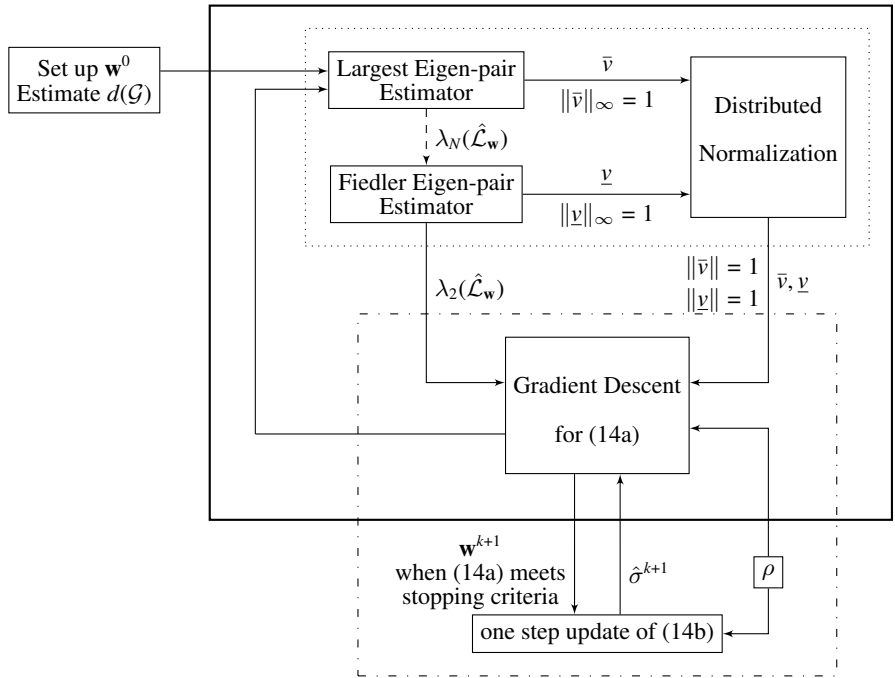

\begin{remark}
    The only prerequisite for initiating the distributed optimization algorithm is knowledge of the graph diameter $d(\mathcal{G})$. Importantly, $d(\mathcal{G})$  can be efficiently estimated in a distributed manner using standard algorithms such as max-consensus \cite{Garin2012} in $2d(\mathcal{G})$ steps. This opens the possibility to address occasional changes in the graph, e.g., link failures or node failures.
\end{remark}
\subsection{Distributed Node Weight Update}
\label{subsec: distributed node weight update}
Solving \cref{eq: overall iterative algorithm for node weighted Laplacian 1} via the gradient algorithm requires taking derivatives of the Lagrangian with respect to the $\mathbf{w}$. This needs the derivative of the largest and Fiedler eigenvalues. The following theorem guarantees that the projected gradient descent converges to the global minimizer of the sub-problem for \cref{eq: overall iterative algorithm for node weighted Laplacian 1}.
\begin{theorem}
    \label{theorem: distributed node weight update}
    Suppose ${\mathbf{w}}(0)> 0,{\sigma}\geq 0,{\rho}>0$. Consider the following dynamics:
    \begin{gather}
        \label{eq: distributed node weight update}
        {w}_i(t+1)=\max\left\{
        w_i(t)-\gamma \frac{\partial {\mathscr{L}}_{{\rho}}}{\partial {w}_i}\left(
        \mathbf{w}(t),\sigma
        \right),0
        \right\}
        ,\ i=1,\cdots,N,
    \end{gather}
    where $\gamma>0$ is the step size. For sufficiently small $\gamma$, the algorithm converges to the global optimal solution of \cref{eq: overall iterative algorithm for node weighted Laplacian 1}.
\end{theorem}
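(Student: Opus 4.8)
The plan is to recognize \cref{eq: overall iterative algorithm for node weighted Laplacian 1} as the minimization of the convex map $\mathbf{w}\mapsto\mathscr{L}_{\rho}(\mathbf{w},\sigma)$ over the convex set $\mathcal{C}=\{\mathbf{w}\in\mathbb{R}^N:\mathbf{w}\geq 0\}$, and to show that \cref{eq: distributed node weight update} is exactly projected gradient descent for this problem. Indeed, the Euclidean projection onto the nonnegative orthant is the entrywise map $\max\{\cdot,0\}$, so the recursion $w_i(t+1)=\max\{w_i(t)-\gamma\,\partial\mathscr{L}_{\rho}/\partial w_i,0\}$ is $\mathbf{w}(t+1)=\Pi_{\mathcal{C}}\!\left(\mathbf{w}(t)-\gamma\nabla_{\mathbf{w}}\mathscr{L}_{\rho}\right)$. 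Convexity of $\mathscr{L}_{\rho}$ in $\mathbf{w}$ for $\sigma\geq 0,\rho>0$ was already established following \cref{eq: Lagrangian function for node weighted Laplacian}. It then remains to verify the two standard hypotheses for global convergence of projected gradient descent on convex problems: that $\mathscr{L}_{\rho}$ is continuously differentiable and that its gradient is Lipschitz continuous on the region traversed by the iterates. Granting these, for any $\gamma\leq 1/L$ (with $L$ the Lipschitz constant) the method is a descent method whose iterates converge to a global minimizer of $\mathscr{L}_{\rho}$ over $\mathcal{C}$, which by \cref{lemma: convexity of the optimization problem} is a global optimum of the subproblem.

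For differentiability I would reuse the similarity reduction from the proof of \cref{lemma: convexity of the optimization problem}: writing $\mathcal{L}=CC^T$, the nonzero eigenvalues of $\hat{\mathcal{L}}_{\mathbf{w}}$ coincide with those of $M(\mathbf{w}):=C^T\diag(\mathbf{w})C$, which is affine, hence analytic, in $\mathbf{w}$. Thus $\lambda_2(\hat{\mathcal{L}}_{\mathbf{w}})=\lambda_1(M(\mathbf{w}))$ and $\lambda_N(\hat{\mathcal{L}}_{\mathbf{w}})=\lambda_{N-1}(M(\mathbf{w}))$, and wherever these eigenvalues are simple they are differentiable with $\partial\lambda/\partial w_i=u^T(\partial M/\partial w_i)u=(c_i^Tu)^2$, where $u$ is the associated unit eigenvector and $c_i^T$ is the $i$th row of $C$. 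The penalty term is handled by noting that $[\max\{0,g\}]^2$ is $C^1$ whenever $g$ is, with continuous derivative $2\max\{0,g\}\,\nabla g$ across the kink; hence $\mathscr{L}_{\rho}$ is $C^1$ on any open set where $\lambda_1(M)$ and $\lambda_{N-1}(M)$ are simple. Working with $M(\mathbf{w})$ rather than $\hat{\mathcal{L}}_{\mathbf{w}}$ conveniently sidesteps the non-smoothness of $\diag(\mathbf{w})^{1/2}$ at the boundary $w_i=0$.

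For the Lipschitz bound I would first establish coercivity to confine the iterates to a compact set. Since $\lambda_N(\hat{\mathcal{L}}_{\mathbf{w}})=\lambda_{N-1}(M)\geq\tfrac{1}{N-1}\operatorname{tr}M(\mathbf{w})=\tfrac{1}{N-1}\sum_i w_i\|c_i\|^2$ and $\|c_i\|^2=[\mathcal{L}]_{ii}=d_i>0$ under \cref{assumption: undirected connected graph}, the objective grows at least linearly in $\|\mathbf{w}\|_1$ on $\mathcal{C}$; since the penalty term is nonnegative, $\mathscr{L}_{\rho}(\cdot,\sigma)$ is coercive and its sublevel sets are compact. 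Restricting to $\{\mathbf{w}\geq 0:\mathscr{L}_{\rho}(\mathbf{w},\sigma)\leq\mathscr{L}_{\rho}(\mathbf{w}(0),\sigma)\}$, the second-order perturbation formulas for simple eigenvalues bound $\nabla^2\mathscr{L}_{\rho}$ in terms of the reciprocal spectral gaps separating $\lambda_1(M)$ and $\lambda_{N-1}(M)$ from the rest of the spectrum; as long as these gaps stay bounded below on the compact set, $\nabla\mathscr{L}_{\rho}$ is Lipschitz with a finite constant $L$. Because projected gradient descent with $\gamma\leq 1/L$ is monotone, the iterates never leave this sublevel set, so $L$ is uniform along the trajectory, and the standard theorem yields $\mathscr{L}_{\rho}(\mathbf{w}(t),\sigma)\to\min_{\mathbf{w}\geq 0}\mathscr{L}_{\rho}$ with $\mathbf{w}(t)$ converging to a global minimizer.

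The main obstacle is precisely this smoothness requirement: the eigenvalue gradients, and more importantly the Lipschitz constant of $\nabla\mathscr{L}_{\rho}$, are only valid while $\lambda_2$ and $\lambda_N$ remain simple and separated from the rest of the spectrum. If either eigenvalue becomes non-simple along the trajectory the objective develops a kink and the smooth argument must be replaced by a nonsmooth (Clarke subdifferential) analysis, the situation deferred to the non-simple eigenvalue discussion later in the paper. Establishing a uniform lower bound on the relevant spectral gaps over the compact sublevel set is therefore the crux; once it is in hand, convexity (\cref{lemma: convexity of the optimization problem}) upgrades convergence of the projected iteration from a stationary point to the global optimum essentially for free.
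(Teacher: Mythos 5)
Your proposal is correct and takes essentially the same route as the paper: the paper's entire proof consists of the one-line observation (stated immediately after the theorem) that $\mathscr{L}_{\rho}(\cdot,\sigma)$ is convex in $\mathbf{w}$ --- via \cref{lemma: convexity of the optimization problem} and the composition argument following \cref{eq: Lagrangian function for node weighted Laplacian} --- combined with the appendix's projection-theorem descent inequality (\cref{subsec: Projected Gradient Descent}), which is exactly your identification of \cref{eq: distributed node weight update} as projected gradient descent onto the nonnegative orthant converging to the global minimum of a convex function. Your additional scaffolding --- differentiability through the affine reduction $M(\mathbf{w})=C^T\diag(\mathbf{w})C$ (which neatly avoids the non-smoothness of $\diag(\mathbf{w})^{1/2}$ at $w_i=0$), coercivity from the trace bound, and the explicit Lipschitz-gradient/spectral-gap hypothesis --- is strictly more careful than the paper, which leaves these regularity conditions implicit and addresses the non-simple eigenvalue obstruction you correctly flag only in its later remark and in \cref{sec: non-simple eigenvalues}.
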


The convergence of \cref{eq: distributed node weight update} follows from that the augmented Lagrangian function is a convex function in $\mathbf{w}$ (demonstrated in \cref{subsubsec: Adjusting the Weights on the Nodes}) and the projected gradient descent converges to the global minimum of a convex function. To obtain the derivatives, we rely on the following lemma to find the derivative of the eigenvalue.
\begin{lemma}
    \label[lemma]{lemma: derivative of the eigenvalue} \cite{Magnus1985} Consider a Hermitian matrix $H(x)\in\mathbb{S}^N$ whose components are differentiable functions of $x$. Suppose the eigenvalue $\lambda(H(x))$ is simple, the differential of the eigenvalue with respect to $x$ is:
    \begin{gather*}
        \frac{\partial {\lambda}\left(H(x)\right)}{\partial x}={u}^T\frac{\partial H(x)}{\partial x}{u},
    \end{gather*}
    where ${u}$ is the eigenvectors corresponding to the eigenvalue $\lambda$ with $\|u\|=1$.
\end{lemma}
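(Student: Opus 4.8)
The plan is to regard the simple eigenvalue $\lambda(x)$ and a corresponding unit eigenvector $u(x)$ as differentiable functions of $x$, differentiate the defining relation $H u = \lambda u$, and then exploit the symmetry of $H$ to eliminate all eigenvector-derivative terms, leaving the scalar formula.

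First I would establish the regularity that makes the statement meaningful, which is the crux of the argument: because $\lambda$ is a \emph{simple} eigenvalue, there exist differentiable maps $x \mapsto \lambda(x)$ and $x \mapsto u(x)$ on a neighborhood of the point of interest satisfying $H(x)u(x) = \lambda(x)u(x)$ and $u(x)^T u(x) = 1$. This can be secured either by citing standard analytic perturbation theory \cite{Magnus1985} or directly via the implicit function theorem applied to the map $F(u,\lambda,x) = \bigl(H(x)u - \lambda u,\ u^T u - 1\bigr)$. Its Jacobian in $(u,\lambda)$ is the bordered matrix
\begin{gather*}
    \begin{bmatrix} H - \lambda I & -u \\ 2u^T & 0 \end{bmatrix},
\end{gather*}
and one checks that this matrix is nonsingular precisely when $\lambda$ is simple: if $(v,\mu)$ lies in its kernel, pairing the top block with $u^T$ and using $u^T(H-\lambda I)=0$ forces $\mu=0$, whence $(H-\lambda I)v=0$ gives $v\in\operatorname{span}(u)$ by simplicity, and the bordering equation $u^T v = 0$ then yields $v=0$. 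Simplicity is indispensable here; with a repeated eigenvalue the branch $(\lambda(x),u(x))$ need not exist or be differentiable.

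With differentiability in hand, the remainder is a short algebraic manipulation. I would differentiate $H u = \lambda u$ to obtain
\begin{gather*}
    \frac{\partial H}{\partial x}u + H\frac{\partial u}{\partial x} = \frac{\partial \lambda}{\partial x}u + \lambda\frac{\partial u}{\partial x},
\end{gather*}
then pre-multiply by $u^T$ and use the normalization $u^T u = 1$ to get
\begin{gather*}
    u^T\frac{\partial H}{\partial x}u + u^T H\frac{\partial u}{\partial x} = \frac{\partial \lambda}{\partial x} + \lambda\, u^T\frac{\partial u}{\partial x}.
\end{gather*}
Since $H$ is Hermitian, $u^T H = (Hu)^T = \lambda u^T$, so the second term on the left equals $\lambda\, u^T \frac{\partial u}{\partial x}$ and cancels the corresponding term on the right. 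What remains is exactly $u^T \frac{\partial H}{\partial x} u = \frac{\partial \lambda}{\partial x}$, the claimed identity. Notably the eigenvector derivative $\frac{\partial u}{\partial x}$ never has to be computed; the symmetry of $H$ causes its contribution to vanish.

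The main obstacle is therefore entirely the first step — producing a well-defined, differentiable eigenpair branch — rather than the differentiation itself. The algebraic cancellation is elementary once regularity is granted, so the proof hinges on translating the simplicity hypothesis into invertibility of the bordered system (equivalently, into differentiability of the spectral projection onto the one-dimensional eigenspace).
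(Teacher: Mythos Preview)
Your argument is correct and is precisely the classical derivation: differentiate the eigen-equation, left-multiply by the unit eigenvector, and use Hermitian symmetry to kill the $\partial u/\partial x$ terms; your care in justifying differentiability via the bordered Jacobian and the simplicity hypothesis is the right way to handle the only nontrivial point. The paper itself does not prove this lemma at all --- it is quoted directly from \cite{Magnus1985} --- so your proposal supplies exactly the standard proof that the cited reference contains.
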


\begin{remark}
    In \cref{lemma: derivative of the eigenvalue}, the eigenvalue $\lambda(H(x))$ is required to be simple, for which the gradient is well defined. In some cases, the eigenvalues of interest may be repeated, e.g., star graph, cycle graph, complete graph, etc. The eigenvalues are continuous, but not differentiable \cite{Clarke1990} in such cases. We obtain the same result by leveraging the subgradient method. The details are discussed in \cref{sec: non-simple eigenvalues}.
\end{remark}

Denote the unit eigevector corresponding to a simple eigenvalue $\lambda(\hat{\mathcal{L}}_{{\mathbf{w}}})$ of weighted Laplacian matrix $\hat{\mathcal{L}}_{{\mathbf{w}}}$ as $v$.
With \cref{lemma: derivative of the eigenvalue,eq: node weighted Laplacian matrix 2}, the differential of the eigenvalue $\lambda(
    {\hat{\mathcal{L}}}_{{\mathbf{w}}}
    )$ with respect to the $i$th node weight ${w}_i$ is:
\begin{align*}
    \frac{\partial \lambda\left(
        {\hat{L}}_{{\mathbf{w}}}
        \right)}{\partial {w}_i}
     & ={v}^T\left\{
    \frac{\partial}{\partial w_i}\diag({\mathbf{w}})^{\frac{1}{2}}\mathcal{L}\diag({\mathbf{w}})^{\frac{1}{2}}+\diag({\mathbf{w}})^{\frac{1}{2}}\mathcal{L}\frac{\partial}{\partial w_i}\diag({\mathbf{w}})^{\frac{1}{2}}
    \right\}{v},                                                                                    \\
     & =\begin{bmatrix}
            0                       &
            \cdots                  &
            {w}_i^{-\frac{1}{2}}v_i &
            \cdots                  &
            0
        \end{bmatrix}\mathcal{L}\diag(\mathbf{w})^{\frac{1}{2}}v =v_i\sum_{j\in\mathcal{N}_i}\left[
    v_i-\left(\frac{w_j}{w_i}\right)^{\frac{1}{2}}v_j
    \right].
\end{align*}
where $v_i$ is the $i$th entry of the eigenvector $v$. With above relation, the partial derivative of the augmented Lagrangian \cref{eq: overall iterative algorithm for node weighted Laplacian 1} with respect to the $i$th node weight, ${w}_i$, is:
\begin{gather}
    \label{eq: partial derivative of the augmented Lagrangian}
    \begin{aligned}
        \frac{\partial {\mathscr{L}}_{{\rho}}}{\partial {w}_i}({\mathbf{w}},{\sigma})
         & =\frac{\partial \lambda_N\left(
            \hat{\mathcal{L}}_{{\mathbf{w}}}
            \right)}{\partial {w}_i}-{\rho}\left[
            {\max}\left\{
            0,1-\lambda_2\left(
            {\hat{\mathcal{L}}}_{{\mathbf{w}}}
            \right)+\frac{{\sigma}}{{\rho}}
            \right\}
            \right]\frac{\partial \lambda_2\left(
            \hat{\mathcal{L}}_{{\mathbf{w}}}
        \right)}{\partial {w}_i}                     \\
         & =\bar{v}_i \sum_{j\in\mathcal{N}_i}\left[
        \bar{v}_i-\left(\frac{w_j}{w_i}\right)^{\frac{1}{2}}\bar{v}_j
        \right]-\rho\left[
            \max\left\{
            0,1-\lambda_2\left(
            \hat{\mathcal{L}}_{{\mathbf{w}}}
            \right)+\frac{{\sigma}}{{\rho}}
            \right\}
            \right]\underline{v}_i \sum_{j\in\mathcal{N}_i}\left[
        \underline{v}_i-\left(\frac{w_j}{w_i}\right)^{\frac{1}{2}}\underline{v}_j
        \right].
    \end{aligned}
\end{gather}
where $\bar{v},\underline{v}$ are the unit eigenvectors corresponding to the largest and Fiedler eigenvalues, and $\bar{v}_i,\underline{v}_i$ mean the $i$th entry of the $\bar{v},\underline{v}$, respectively. The derivative of the augmented Lagrangian function with respect to the $i$th node weight ${w}_i$ requires the neighbors' weights $w_j$, the $i$th and $j$th entries of the unit largest and Fiedler eigenvectors, as well as the Fiedler eigenvalue. This property allows the distributed implementation of the weight updates.

For full distribution update, at each step of \cref{eq: distributed node weight update}, the $i$th agent should get the following information ready for calculation and to share with its neighbor:
\begin{inlineenum}
    \item The $i$th entry of the largest and Fiedler eigenvectors,
    \item The Fiedler value of $\mathcal{L}_\mathbf{w}$,
    \item Lagrangian multiplier $\sigma$ and parameter $\rho$, which are fixed during the iteration of \cref{eq: overall iterative algorithm for node weighted Laplacian 1} and shared among the network.
\end{inlineenum}

The first information is obtained inside the inner iteration, where the power iteration is employed to estimate the largest and Fiedler eigenvectors. Once the eigenvectors are estimated, the Fiedler value can be obtained simply by the definition of the eigenvalue. The parameter $\rho$ is a fixed constant that is set beforehand. The Lagrangian multiplier $\sigma$, on the other hand, relies on the uniform estimation of the Fiedler value and is updated by \cref{eq: overall iterative algorithm for node weighted Laplacian 2}. If each agent's copy of $\sigma$ starts with the same initial value, it will remain the same as long as every agent's estimation of the Fiedler value is the same, by \cref{eq: overall iterative algorithm for node weighted Laplacian 2}.
\subsection{Eigen-pair Estimation}
\label{subsec: Eigen-pair Estimation}
Time step $t$ is used to denote the steps in the update of \cref{eq: distributed node weight update}. Once $\mathbf{w}(t)$ is updated, new eigenvalues and eigenvectors for the new weighted Laplacian matrix $\mathcal{L}_{\mathbf{w}(t)}$ are required to be estimated. For this, the outer iteration $t$ is paused, and the inner iteration $T$ is started. Once the estimates are converged within the stopping criteria, the outer iteration $t$ is resumed using \cref{eq: distributed node weight update}.

The estimator is composed of three parts: the largest eigen-pair estimator, the Fiedler eigen-pair estimator, and the normalization block. Estimating the largest eigen-pair is leveraging the power iteration algorithm for the extreme eigenvalues, which can be performed in a distributed manner. The Fiedler eigen-pair estimator, on the other hand, is more challenging, for which we modify the ideas proposed by Bertrand et al \cite{Bertrand2013}. The normalization block is to distributedly find the eigenvectors with Euclidean norm equal to one.
\subsubsection{Max-consensus}

The power iteration requires vector normalization, particularly Euclidean, which is computationally intensive in distributed systems. To address this problem, we employ the max-consensus algorithm to obtain the infinity norm in finite time.

\begin{lemma}
    \label{lemma: max consensus} \cite{Lucchese2015,Nejad2009}
    Consider a graph $\mathcal{G}=(\mathcal{V},\mathcal{E})$, a vector $p\in\mathbb{R}^{|\mathcal{V}|}$ with the $i$th entry initialized on $i$th agent to be $p_i(0)$.  With the max-consensus algorithm \cref{eq: pure max consensus}:
    \begin{gather}
        \label{eq: pure max consensus}
        p_i(T+1)=\max_{j\in\mathcal{N}_i}\left\{
        p_j(T),p_i(T)
        \right\},\ \forall i\in\mathcal{V},
    \end{gather}
    the vector $p$ converges to the $\left\|p(0)\right\|_\infty\mathbf{1}_{|\mathcal{V}|}$ in at most $d(\mathcal{G})$ steps, i.e. $ p(T)=\left\|p(0)\right\|_\infty\mathbf{1}_{|\mathcal{V}|},\forall T\geq d(\mathcal{G})$.
\end{lemma}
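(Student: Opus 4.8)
The plan is to establish a single structural invariant that tracks, at every iteration $T$, exactly which initial values have had time to reach each agent, and then invoke the definition of the diameter to conclude that every value reaches every agent within $d(\mathcal{G})$ steps. Concretely, write $M=\max_{j}p_j(0)$, and I would prove by induction on $T$ the characterization
\begin{gather*}
    p_i(T)=\max\left\{\,p_j(0) : \operatorname{dist}(v_j,v_i)\le T\,\right\},\qquad \forall i\in\mathcal{V}.
\end{gather*}
This says each agent holds the largest initial value within graph distance $T$ of itself; once $T$ reaches the diameter the ball covers all of $\mathcal{V}$ and the right-hand side becomes $M$ for every $i$.

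First I would record two elementary monotonicity facts that make the update well behaved: (a) each trajectory is non-decreasing, $p_i(T+1)\ge p_i(T)$, directly from the $\max$ in \cref{eq: pure max consensus}; and (b) the global maximum is invariant, $\max_i p_i(T)=M$ for all $T$, since every updated value is a maximum of quantities bounded above by $M$ (upper bound), while the agent $i^\star$ initialized at $M$ retains it by (a) (lower bound). Together these guarantee that once $p_i(T)=M$ it remains $M$, so it suffices to prove the value is reached at $T=d(\mathcal{G})$.

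Then I would carry out the core induction. The base case $T=0$ is immediate since the only vertex within distance $0$ of $v_i$ is $v_i$ itself. For the inductive step I would expand the update as $p_i(T+1)=\max\{p_i(T),\max_{k\in\mathcal{N}_i}p_k(T)\}$, substitute the inductive hypothesis for $p_i(T)$ and for each $p_k(T)$, and then use the set identity
\begin{gather*}
    \{v_j:\operatorname{dist}(v_j,v_i)\le T+1\}=\{v_j:\operatorname{dist}(v_j,v_i)\le T\}\cup\bigcup_{k\in\mathcal{N}_i}\{v_j:\operatorname{dist}(v_j,v_k)\le T\},
\end{gather*}
which holds because a shortest path of length at most $T+1$ from $v_j$ to $v_i$ either already has length $\le T$ or passes through a neighbor of $v_i$ lying at distance $\le T$ from $v_j$. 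This identity is exactly where \cref{assumption: undirected connected graph} enters: undirectedness makes $\mathcal{N}_i$ coincide with the vertices adjacent to $v_i$ along such paths, and connectedness guarantees $\operatorname{dist}$ is finite so that $d(\mathcal{G})$ is well defined. Taking maxima over the union then yields the claim for $T+1$.

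Finally, setting $T\ge d(\mathcal{G})$ forces $\operatorname{dist}(v_j,v_i)\le d(\mathcal{G})\le T$ for every pair, so the ball is all of $\mathcal{V}$ and $p_i(T)=M$ for every $i$, i.e.\ $p(T)=M\,\mathbf{1}_{|\mathcal{V}|}$; monotonicity (a) together with invariance (b) keeps it there for all larger $T$. To match the stated limit $\|p(0)\|_\infty\mathbf{1}_{|\mathcal{V}|}$ I would note that the algorithm is applied to non-negative inputs (the entries being the absolute values, or squared magnitudes, arising in the normalization step), for which $M=\max_j p_j(0)=\max_j|p_j(0)|=\|p(0)\|_\infty$. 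The only genuine subtlety, and the step I expect to be the main obstacle, is the set identity for distance balls together with the clean bookkeeping of the induction; everything else reduces to the two routine monotonicity observations.
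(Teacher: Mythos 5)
Your proof is correct, but there is nothing in the paper to compare it against: the paper states this lemma with a citation to \cite{Lucchese2015,Nejad2009} and gives no proof of its own, so your write-up supplies an argument where the paper imports one. What you propose is the standard ``flooding'' argument found in those references, and every step checks out. The invariant $p_i(T)=\max\{p_j(0):\operatorname{dist}(v_j,v_i)\le T\}$ is the right object; the base case is trivial; and the ball-decomposition identity in your inductive step is a direct consequence of the triangle inequality for graph distance (a shortest path of length $T+1$ into $v_i$ has its penultimate vertex in $\mathcal{N}_i$ at distance $T$ from the source, and conversely $\operatorname{dist}(v_j,v_i)\le\operatorname{dist}(v_j,v_k)+1$ for $k\in\mathcal{N}_i$), with undirectedness from \cref{assumption: undirected connected graph} ensuring $\mathcal{N}_i$ is exactly the set of vertices adjacent to $v_i$ and connectedness making $d(\mathcal{G})$ finite. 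Two minor remarks. First, your monotonicity facts (a) and (b) are strictly redundant: the invariant alone already yields $p(T)=M\,\mathbf{1}_{|\mathcal{V}|}$ for \emph{every} $T\ge d(\mathcal{G})$, since the distance ball equals $\mathcal{V}$ for all such $T$; they are harmless scaffolding but not needed to ``keep'' the value. Second, your closing caveat is genuinely important and correctly handled: as literally stated, the limit $\left\|p(0)\right\|_\infty\mathbf{1}_{|\mathcal{V}|}$ holds only for non-negative initializations, since max-consensus computes $\max_i p_i(0)$, which differs from $\max_i|p_i(0)|$ when the largest-magnitude entry is negative. In the paper's usage (normalizing power-iteration vectors) the algorithm must be run on absolute values or squared entries, exactly as you note, so the identification $M=\left\|p(0)\right\|_\infty$ is legitimate there; flagging this implicit hypothesis is a point in your proof's favor rather than a gap.
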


As discussed in \cite{Garin2012}, the diameter of the graph can be estimated within $2d(\mathcal{G})$ iterations. With the knowledge of the graph diameter, the max-consensus algorithm can be terminated at most $d(\mathcal{G})$ steps. In this paper, we repeatedly employ the max-consensus algorithm to normalize the eigenvectors of the weighted Laplacian matrix.

\subsubsection{Largest Eigen-pair Estimator}
\label{subsec: Finding the largest eigenvalue}
A variation of the well-known power iteration method \cite{Corso1997,Lin2010} is used, due to the ease for distributed implementation. It only involves distributed elementary matrix-vector multiplication and scalar multiplication. Once converged, $\lambda_N$ can be obtained easily (\cref{subsec: Finding the eigenvalue}). Consider the following modified power iteration formula:
\begin{gather}
    \label{eq: central distributed power iteration using infty norm}
    \hat{x}(T+1)=\frac{\hat{\mathcal{L}}_\mathbf{w}x(T)}{\left\|\hat{\mathcal{L}}_\mathbf{w}x(T)\right\|_\infty}.
\end{gather}

Here, we rely on the max-consensus algorithm employed in \cref{eq: central distributed power iteration using infty norm} for normalization, which is a simple scaling of the 2-norm typically used. After each multiplication, the scaling will be done via the max-consensus algorithm in $d(\mathcal{G})$ steps.

First, a multiplication of the weighted Laplacian matrix $\hat{\mathcal{L}}_\mathbf{w}$ with the vector $x$ is performed, which is a distributed operation:
\begin{gather}
    \label{eq: distributed multiplication of the weighted Laplacian matrix}
    \left[
        \hat{\mathcal{L}}_\mathbf{w}x(T)
        \right]_i=w_i^{\frac{1}{2}}\sum_{j\in\mathcal{N}_i} \left[
    w_{i}^{\frac{1}{2}}x_i(T)-w_j^{\frac{1}{2}}x_j(T)
    \right],\ \forall i\in\mathcal{V},
\end{gather}
where the notation $[\cdot]_i$ means the $i$th entry of the vector. Then, the max-consensus algorithm \cref{eq: pure max consensus} is used to find the infinity norm of the vector $\hat{\mathcal{L}}_\mathbf{w}x(T+1)$, which takes $d(\mathcal{G})$ steps. Finally, the vector is normalized by dividing the infinity norm, which takes another step. Then $T$ becomes $T+1$ and the process is repeated. The iteration convergence ratio is known to be $\left({\lambda_{N-1}(\mathcal{L}_{\mathbf{w}})}/{\lambda_{N}(\mathcal{L}_{\mathbf{w}})}\right)$.

\subsubsection{Fiedler Eigen-pair Estimator}
The Fiedler eigen-pair is the second smallest eigenvalue of the matrix $\hat{\mathcal{L}}_\mathbf{w}$, and a method of ruling out the natural zero eigenvalue is needed. We leverage the idea proposed by Bertrand et al. \cite{Bertrand2013} and modify it for node weighted Laplacian matrix as well as the max consensus. Consider the auxiliary matrix
$$\mathcal{L}^c_\mathbf{w}=I-\frac{1}{\alpha}\hat{\mathcal{L}}_\mathbf{w},$$
where $\alpha>\frac{\lambda_2(\hat{\mathcal{L}}_\mathbf{w})+\lambda_N(\hat{\mathcal{L}}_\mathbf{w})}{2}$, such that $1-\frac{1}{\alpha}\lambda_2\left(
    \hat{\mathcal{L}}_\mathbf{w}
    \right)$ is the second extreme eigenvalue. A practical choice can be
\begin{gather}
    \alpha = \left\{
    \begin{array}{lc}
        \lambda_N(\mathcal{L}_\mathbf{w})                                & \text{if } \lambda_N(\mathcal{L}_\mathbf{w}) \text{ is found } \\
        \displaystyle\max_{i=\{1,\dots,N\}} 2\sum_{j\in\mathcal{N}_i}w_i & \text{otherwise}
    \end{array}
    \right.,
\end{gather}
The second value is found by applying the Gershgorin circle theorem \cite{Horn2012} for the weighted Laplacian matrix $\mathcal{L}_\mathbf{w}$, which can be distributed to every agent using max-consensus. With the appropriate choice of $\alpha$, consider the following dynamics
\begin{gather}
    \label{eq: central Fiedler eigen-pair estimator}
    y(T+1)=\left\{
    \begin{array}{lc}
        \frac{\hat{\mathcal{L}}_\mathbf{w} y(T)}{\left\|\hat{\mathcal{L}}_\mathbf{w} y(T)\right\|_\infty} & \qquad T \text{ mod } p=0, \\[4mm]
        \frac{\mathcal{L}^c_\mathbf{w} y(T)}{\left\|\mathcal{L}^c_\mathbf{w} y(T)\right\|_\infty}         & \qquad \text{otherwise},
    \end{array}
    \right.
\end{gather}
Recall the range space of $\hat{\mathcal{L}}_{\mathbf{w}}$ defined in \cref{eq: range space of the node weighted Laplacian matrix}. The main idea in \cref{eq: central Fiedler eigen-pair estimator} is to only search in the space $\mathcal{S}$, in which the Fiedler eigenvector of $\hat{\mathcal{L}}_{\mathbf{w}}$ corresponds to the extreme eigenvalue of the matrix $\mathcal{L}^c_\mathbf{w}$. Ideally, this can be done by multiplying $\hat{\mathcal{L}}_\mathbf{w}$ once, which corresponds to the case $p=\infty$, after which $y(T)\in\mathcal{S}$ for all following updates. However, in practice, the calculation error inevitably introduces the direction $\diag(\mathbf{w})^{-\frac{1}{2}}\mathbf{1}_N$ back, which will be amplified during the multiplication of $\mathcal{L}^c_\mathbf{w}$. Therefore, from time to time (every $p$ round), $\hat{\mathcal{L}}_{\mathbf{w}}$ is introduced to eliminate the $\diag(\mathbf{w})^{-\frac{1}{2}}\mathbf{1}_N$ in $y(T)$.

As demonstrated in \cite{Bertrand2013}, a rather tight bound can be obtained by choosing $p$ to be
\begin{gather}
    p\geq \frac{\alpha}{\lambda_2(\mathcal{L}_\mathbf{w})}=\frac{\lambda_N(\mathcal{L}_\mathbf{w})}{\lambda_2(\mathcal{L}_\mathbf{w})},
\end{gather}
which is exactly the loss function of the optimization problem here. In the very first iteration, when the agents know nothing about $\lambda_2(\mathcal{L})$, a lower bound \cite{Lu2007} of $\lambda_2(\mathcal{L})$ can be replaced to obtain $p$:
\begin{gather}
    \label{eq: choice of p with lower bound}
    \lambda_2(\mathcal{L}) \geq \frac{4d(\mathcal{G})}{d(\mathcal{G})+1}\Rightarrow p\geq\left\lceil \frac{\lambda_N(\mathcal{L})\left[d(\mathcal{G})+1\right]}{4d(\mathcal{G})}\right\rceil.
\end{gather}

The distributed implementation of \cref{eq: central Fiedler eigen-pair estimator} follows the same argument as the largest eigen-pair estimator in \cref{eq: distributed multiplication of the weighted Laplacian matrix}:
\begin{gather*}
    \left[
        \mathcal{L}^c_\mathbf{w}y(T)
        \right]_i= y_i(T)-\frac{1}{\alpha} w_i^{\frac{1}{2}}\sum_{j\in\mathcal{N}_i}\left[
    w_i^{\frac{1}{2}}y_i(T)-w_j^{\frac{1}{2}}y_j(T)
    \right],\ \forall i\in\mathcal{V}.
\end{gather*}
\subsubsection{Normalization}
When applying the gradient of the augmented Lagrangian method \cref{eq: partial derivative of the augmented Lagrangian}, it is necessary for the eigenvectors to satisfy the unit Euclidean-norm condition ($\|x\|_2 = 1$) as stated in \cref{lemma: derivative of the eigenvalue}, instead of the infinity norm used above. For this, an additional normalization step is then required. To apply the $l_2$ normalization, a consensus on the square of the states is performed using the original Laplacian matrix $\mathcal{L}$ as the communication matrix:
\begin{subequations}
    \label{Normalization procedure}
    \begin{align}
        \label{Normalization procedure for x}
        \tilde{x}_i(T+1) & = \tilde{x}_i(T) - \frac{1}{\beta} \sum_{j \in \mathcal{N}_i}  \left[ \tilde{x}_i(T) - \tilde{x}_j(T) \right], \\
        \label{Normalization procedure for y}
        \tilde{y}_i(T+1) & = \tilde{y}_i(T) - \frac{1}{\beta} \sum_{j \in \mathcal{N}_i}  \left[ \tilde{y}_i(T) - \tilde{y}_j(T) \right],
    \end{align}
\end{subequations}
where $x^*,y^*$ are the limit vectors obtained through \cref{eq: central distributed power iteration using infty norm,eq: central Fiedler eigen-pair estimator}, and initial conditions are $\tilde{x}_i(0) = (x_i^*)^2$ and $\tilde{y}_i(0) = (y_i^*)^2, \forall i\in\mathcal{V}$. As stated in the motivating example \cref{eq: optimal stepsize for real average consensus}, the optimal choice of $\beta$ is:
\begin{gather*}
    \beta=\frac{\lambda_N(\mathcal{L})+\lambda_2(\mathcal{L})}{2},
\end{gather*}
which should be distributedly available to all agents after each estimation of the largest and Fiedler eigen-pairs.
\subsection{Simulation Results}
\subsubsection{Node Weight Optimization}
\label{subsubsec: Node weight optimization}
In this section, we consider the graph topology shown in \cref{fig:graph topology}. The algorithms proposed in \cref{sec: Node weights} are implemented to optimize the node weights. The unweighted Laplacian matrix $\mathcal{L}$ exhibits the following baseline finite condition number:
\begin{gather}
    \label{eq: baseline finite condition number}
    \frac{\lambda_N}{\lambda_2}(\mathcal{L})=4.2689.
\end{gather}
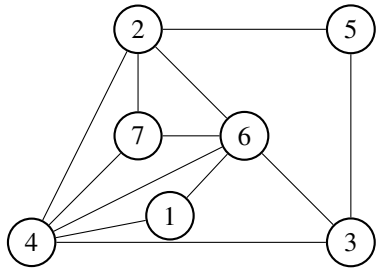
\begin{figure}[htbp]
    \centering
    \begin{tikzpicture}[every node/.style={circle,thick,draw},scale=0.7]
        \node (1) at (0.6,0.5) {1};
        \node (2) at (0,4) {2};
        \node (3) at (4,0) {3};
        \node (4) at (-2,0) {4};
        \node (5) at (4,4) {5};
        \node (6) at (2,2) {6};
        \node (7) at (0,2) {7};
        \draw (1) -- (4) -- (7) -- (2) -- (5) -- (3);
        \draw (1) -- (6) -- (2);
        \draw (4) -- (3) -- (6) -- (7);
        \draw (2) -- (4) -- (6);
    \end{tikzpicture}
    \caption{Graph topology used for simulation}
    \label{fig:graph topology}
\end{figure}

The simulation parameters used are listed in \cref{tab: parameter settings}. The evolution of the node weights over the outer iteration step $t$ is illustrated in \cref{fig: Node weight optimization}. The convergence behavior of the largest and Fiedler eigenvalues, as well as the resulting finite condition number, are depicted in \cref{fig: Node weight Eigenvalues}. The algorithm reaches the following optimized node-weighted Laplacian matrix and its corresponding finite condition number:
\begin{gather}
    \label{eq: node weight optimization result}
    \mathcal{L}_{\mathbf{w}^*}=\begin{bsmallmatrix}
        1.8539&         0&         0&   -0.9269  &       0 &  -0.9269 &        0\\
        0&    1.1287&         0&   -0.2822  & -0.2822 &  -0.2822 &  -0.2822\\
        0&         0&    1.2581&   -0.4194  & -0.4194 &  -0.4194 &        0\\
        -0.2442&   -0.2442&   -0.2442&    1.2208  &       0 &  -0.2442 &  -0.2442\\
        0&   -0.9346&   -0.9346&         0  &  1.8692 &        0 &        0\\
        -0.2442&   -0.2442&   -0.2442&   -0.2442  &       0 &   1.2208 &  -0.2442\\
        0&   -0.6192&         0&   -0.6192  &       0 &  -0.6192 &   1.8576\\
    \end{bsmallmatrix},\frac{\lambda_N}{\lambda_2}\left(
    \mathcal{L}_{\mathbf{w}^*}
    \right)=2.5584.
\end{gather}
Compared to the unweighted Laplacian matrix \cref{eq: baseline finite condition number}, the optimization of edge weights has reduced the finite condition number by $40\%$. On the other hand, if one uses an SDP solver to solve the centralized optimization problem \cref{eq: LMI formulation for node weight optimization}, the optimal finite condition number is ${\kappa}=2.5445$, which corresponds to a relative error of $0.55\%$.
\begin{figure}[htbp]
    \centering
    \begin{subfigure}{.49\linewidth}
        \centering
        \def\ymax{1.2}
        \def\ymin{0}
        \begin{tikzpicture}[scale=0.7]
            \begin{axis}[
                    xlabel={Outer iteration steps $t$},
                    ylabel={Node weights ${\mathbf{w}}$},
                    ymin=\ymin, 
                    enlarge x limits={0},
                    ymax=\ymax,
                    cycle list={%
                            {blue, semithick},%
                            {red, semithick},%
                            {green, semithick},%
                            {orange, semithick},%
                            {purple, semithick},%
                            {brown, semithick},%
                            {pink, semithick},%
                            {gray, semithick},%
                            {cyan, semithick},%
                            {magenta, semithick},%
                            {lime, semithick},%
                            {olive, semithick},%
                            {teal, semithick},%
                            {violet, semithick},%
                            {yellow, semithick},%
                            {black, semithick},%
                            {darkgray, semithick},%
                            {lightgray, semithick}%
                        },
                    axis lines=box,
                    xtick pos =left,
                    ytick pos =bottom,
                    legend style={
                            at={(0.5,1.2)},  
                            anchor=south,     
                            legend columns=7 
                        },
                    clip=false,
                ]
                \pgfplotstableread[col sep=comma]{pics/nodeWeight.csv}{\nodeW}
                \pgfplotstablegetcolsof{\nodeW}
                \pgfmathtruncatemacro{\numcols}{\pgfplotsretval-1}

                \foreach \i in {1,...,\numcols} {
                        \addplot table[x index=0, y index=\i, col sep=comma] {\nodeW};
                        \addlegendentryexpanded{${w}_{\i}$}
                    }
                \pgfplotstableread[col sep=comma]{pics/nodeWeightMu.csv}{\nodeWMu}
                \pgfplotstablegetrowsof{\nodeWMu}
                \pgfmathtruncatemacro{\lastrow}{\pgfmathresult-1} 
                \foreach \i in {0,...,\lastrow} {
                        \pgfplotstablegetelem{\i}{0}\of\nodeWMu
                        \pgfmathsetmacro{\y}{\pgfplotsretval}  

                        \pgfplotstablegetelem{\i}{1}\of\nodeWMu
                        \pgfmathsetmacro{\x}{\pgfplotsretval}  

                        \addplot[black, thick, dashed] coordinates {(\x,\ymin) (\x,\ymax)};

                        \pgfmathparse{(\i>=2)*(-0.4\mywidth+.3\mywidth*(\i-1))}
                        \edef\xoffset{\pgfmathresult}
                        \edef\doplot{\noexpand\node [black,above, xshift=\xoffset,align=center] at (\x,\ymax) {$\expandonce{\sigma}^{\i}=$\\$\pgfmathprintnumber[fixed,precision=3]{\y}$};}
                        \doplot
                    }
            \end{axis}
        \end{tikzpicture}
        \caption{Node Weight Update}
        \label{fig: Node weight optimization}
    \end{subfigure}
    \begin{subfigure}{.49\linewidth}
        \centering
        \def\ymax{7}
        \begin{tikzpicture}[scale=0.7]
            \begin{axis}[
                    xlabel={Outer iteration steps  $t$},
                    ylabel={eigenvalues \& finitie condition number},
                    ymin=0, 
                    ymax=\ymax,
                    enlarge x limits={0},
                    cycle list name=color list,
                    axis lines=box,
                    xtick pos =left,
                    ytick pos =bottom,
                    legend style={
                            at={(0.5,1.2)},  
                            anchor=south,     
                            legend columns=7 
                        },
                    clip=false,
                ]
                \addplot table[x index=0, y index=1, col sep=comma] {pics/nodeWeightEigen.csv};
                \addlegendentry{$\lambda_N(\mathcal{L}_\mathbf{w})$}

                \addplot table[x index=0, y index=2, col sep=comma] {pics/nodeWeightEigen.csv};
                \addlegendentry{$\lambda_2(\mathcal{L}_\mathbf{w})$}

                \addplot table[x index=0, y expr=\thisrowno{2}/\thisrowno{1}, col sep=comma] {pics/nodeWeightEigen.csv};
                \addlegendentry{${\lambda_N(\mathcal{L}_\mathbf{w})}/{\lambda_2(\mathcal{L}_\mathbf{w})}$}

                \pgfplotstableread[col sep=comma]{pics/nodeWeightMu.csv}{\nodeWMu}
                \pgfplotstablegetrowsof{\nodeWMu}
                \pgfmathtruncatemacro{\lastrow}{\pgfmathresult-1} 
                \foreach \i in {0,...,\lastrow} {
                        \pgfplotstablegetelem{\i}{0}\of\nodeWMu
                        \pgfmathsetmacro{\y}{\pgfplotsretval}  

                        \pgfplotstablegetelem{\i}{1}\of\nodeWMu
                        \pgfmathsetmacro{\x}{\pgfplotsretval}  

                        \addplot[black, thick, dashed] coordinates {(\x,0) (\x,\ymax)};

                        \pgfmathparse{(\i>=2)*(-0.4\mywidth+.3\mywidth*(\i-1))}
                        \edef\xoffset{\pgfmathresult}
                        \edef\doplot{\noexpand\node [black,above, xshift=\xoffset,align=center] at (\x,\ymax) {$\expandonce{\sigma}^{\i}=$\\$\pgfmathprintnumber[fixed,precision=3]{\y}$};}
                        \doplot
                    }
            \end{axis}
        \end{tikzpicture}
        \caption{Eigenvalues and finite condition number of the node weight optimization}
        \label{fig: Node weight Eigenvalues}
    \end{subfigure}
    \caption{}
\end{figure}
\vspace{-5mm}
\subsubsection{MAS Consensus on different node weights}
In this section, we compare the consensus behavior of MAS under both unweighted and optimized node-weighted Laplacian matrices. Adopting the dynamics given in \cite{Col2018} and discretizing using a zero-order hold with a sampling time of $T = 0.5$:
\begin{gather*}
    \begin{bmatrix}
        \begin{array}{c|c|c}
            A   & B_1 & B_2 \\
            \hline
            C_1 & 0   & 0   \\
            \hline
            C_2 & D   & 0   \\
        \end{array}
    \end{bmatrix}=
    \begin{bmatrix}
        \begin{array}{cc|c|c}
            0.9232  & 0.4460 & 0.1125 & 0.4893  \\
            -0.4460 & 0.9232 & 0.4893 & -0.1125 \\
            \hline
            1       & 1      & 0      & 0       \\
            \hline
            0       & 1      & 0      & 0       \\
        \end{array}
    \end{bmatrix}
\end{gather*}

For the unweighted Laplacian matrix $\mathcal{L}$, we solve the optimization problem \cref{CL H infty optimization} at $\lambda_i= \lambda_2(\mathcal{L})=1.4892$ and $\lambda_N(\mathcal{L})=6.3574$, yielding the control gain matrices in \cref{eq: Generalized modified DOF with network difference signal} and \cref{eq: Generalized modified DOF with unweighted Laplacian} with $\gamma=1.1547$. In contrast, for the optimized node-weighted Laplacian matrix $\mathcal{L}_{\mathbf{w}^*}$ obtained in \cref{eq: node weight optimization result}, we solve the LMI problem in \cref{CL H infty optimization} at $\lambda_i=\lambda_2(\mathcal{L}_{\mathbf{w}^*})=1$ and $\lambda_N(\mathcal{L}_{\mathbf{w}^*})=2.5584$, resulting in \cref{eq: Generalized modified DOF with weighted Laplacian} with $\gamma=0.6957$.
\begin{figure}
    \centering
    \begin{subfigure}{.49\linewidth}
        \begin{gather*}
            \begin{bmatrix}
                \begin{array}{c|c}
                    A_c & B_c \\
                    \hline
                    C_c & D_c
                \end{array}
            \end{bmatrix}=
            \begin{bmatrix}
                \begin{array}{cc|c}
                    0.8346  & 0.1906  & -0.1413 \\
                    -0.8331 & -0.1892 & 0.1413  \\
                    \hline
                    -0.7914 & -2.2738 & -0.3995
                \end{array}
            \end{bmatrix}
        \end{gather*}
        \caption{Control gain matrices for unweighted Laplacian matrix $\mathcal{L}$}
        \label{eq: Generalized modified DOF with unweighted Laplacian}
    \end{subfigure}
    \begin{subfigure}{.49\linewidth}
        \begin{gather*}
            \begin{bmatrix}
                \begin{array}{c|c}
                    A_c^* & B_c^* \\
                    \hline
                    C_c^* & D_c^*
                \end{array}
            \end{bmatrix}=
            \begin{bmatrix}
                \begin{array}{cc|c}
                    0.8341  & 0.1901  & -0.6734 \\
                    -0.8340 & -0.1901 & 0.6734  \\
                    \hline
                    -0.7931 & -2.2754 & -0.5415
                \end{array}
            \end{bmatrix}
        \end{gather*}
        \caption{Control gain matrices for optimized weighted Laplacian matrix $\mathcal{L}_{\mathbf{w}^*}$}
        \label{eq: Generalized modified DOF with weighted Laplacian}
    \end{subfigure}
    \caption{}
\end{figure}
{With the noise $\xi_i$ are white noise that decays exponentially at the speed of $e^{-0.1k}$ over time,}
the simulation results are shown in \cref{fig: unweighted_sim,fig: weighted_sim}. From the same random initial state condition and zero initial control state, the trajectory of the MAS under the unweighted Laplacian is shown in \cref{fig: unweighted_sim}, while the trajectory of the MAS under the optimized node-weighted Laplacian is shown in \cref{fig: weighted_sim}. The oscillatory trajectories in the figure originate from the presence of unstable poles in the open-loop system, as consensus control is focused on achieving agreement. By optimizing the finite condition number of the Laplacian matrix, the resulting optimal controller not only accelerates convergence but also reduces the $l_2$ gain with respect to noise, illustrated in \cref{fig: optimal weighted_sim_inout,fig: unweighted_sim_inout}.

\begin{figure}[htbp]
    \centering
    \pgfplotstableread[col sep=comma]{pics/unweighted_sim.csv}{\unweightedSimData}
    \pgfplotstableread[col sep=comma]{pics/weighted_sim.csv}{\weightedSimData}
    \pgfplotstablegetcolsof{\unweightedSimData}
    \pgfmathtruncatemacro{\numcols}{\pgfplotsretval-1}
    \edef\ymax{3}
    \begin{subfigure}{.45\linewidth}
        \centering
        \begin{tikzpicture}
            \begin{groupplot}[
                    group style={group size=1 by 2},
                    width=\linewidth,
                    height=0.5\linewidth,
                    cycle list={%
                            {blue, semithick},%
                            {red, semithick},%
                            {green, semithick},%
                            {orange, semithick},%
                            {purple, semithick},%
                            {brown, semithick},%
                            {pink, semithick},%
                            {gray, semithick},%
                            {cyan, semithick},%
                            {magenta, semithick},%
                            {lime, semithick},%
                            {olive, semithick},%
                            {teal, semithick},%
                            {violet, semithick},%
                            {yellow, semithick},%
                            {black, semithick},%
                            {darkgray, semithick},%
                            {lightgray, semithick}%
                        }
                ]
                \nextgroupplot[
                    xlabel={$k$},
                    ylabel={Agent state $x_{i1}$},
                    xmin=0,
                    ymax=\ymax,
                    ymin=-\ymax,
                    enlarge x limits=false,
                    axis lines=box,
                    xtick pos =left,
                    ytick pos =bottom,
                    legend style={
                            at={(0.45,1.05)},  
                            anchor=south,     
                            legend columns=4 
                        },
                    clip=false,
                ]
                \foreach \i in {1,3,...,\numcols} {
                        \addplot table[x index=0, y index=\i, col sep=comma] {\unweightedSimData};
                        \pgfmathparse{(\i+1)/2}
                        \edef\agentIndex{\pgfmathresult}
                        \addlegendentryexpanded{Agent ${\pgfmathprintnumber[fixed,precision=0]{\agentIndex}}$}
                    }

                \nextgroupplot[
                    xlabel={$k$},
                    ylabel={Agent state $x_{i2}$},
                    xmin=0,
                    ymax=\ymax,
                    ymin=-\ymax,
                    enlarge x limits=false,
                    axis lines=box,
                    xtick pos =left,
                    ytick pos =bottom,
                    clip=false,
                ]
                \foreach \i in {2,4,...,\numcols} {
                        \addplot table[x index=0, y index=\i, col sep=comma] {\unweightedSimData};
                        \pgfmathparse{(\i)/2}
                        \edef\agentIndex{\pgfmathresult}
                    }
            \end{groupplot}
        \end{tikzpicture}
        \caption{Consensus of MAS with unweighted Laplacian matrix $\mathcal{L}$}
        \label{fig: unweighted_sim}
    \end{subfigure}
    \begin{subfigure}{.45\linewidth}
        \centering
        \begin{tikzpicture}
            \begin{groupplot}[
                    group style={group size=1 by 2},
                    width=\linewidth,
                    height=0.5\linewidth,
                    cycle list={%
                            {blue, semithick},%
                            {red, semithick},%
                            {green, semithick},%
                            {orange, semithick},%
                            {purple, semithick},%
                            {brown, semithick},%
                            {pink, semithick},%
                            {gray, semithick},%
                            {cyan, semithick},%
                            {magenta, semithick},%
                            {lime, semithick},%
                            {olive, semithick},%
                            {teal, semithick},%
                            {violet, semithick},%
                            {yellow, semithick},%
                            {black, semithick},%
                            {darkgray, semithick},%
                            {lightgray, semithick}%
                        }
                ]

                \nextgroupplot[
                    xlabel={$k$},
                    ylabel={Agent state $x_{i1}$},
                    xmin=0,
                    ymax=\ymax,
                    ymin=-\ymax,
                    enlarge x limits=false,
                    axis lines=box,
                    xtick pos =left,
                    ytick pos =bottom,
                    legend style={
                            at={(0.45,1.05)},  
                            anchor=south,     
                            legend columns=4 
                        },
                    clip=false
                ]
                \foreach \i in {1,3,...,\numcols} {
                        \addplot table[x index=0, y index=\i, col sep=comma] {\weightedSimData};
                        \pgfmathparse{(\i+1)/2}
                        \edef\agentIndex{\pgfmathresult}
                        \addlegendentryexpanded{Agent ${\pgfmathprintnumber[fixed,precision=0]{\agentIndex}}$}
                    }

                \nextgroupplot[
                    xlabel={$k$},
                    ylabel={Agent state $x_{i2}$},
                    xmin=0,
                    ymax=\ymax,
                    ymin=-\ymax,
                    enlarge x limits=false,
                    axis lines=box,
                    xtick pos =left,
                    ytick pos =bottom,
                    clip=false,
                ]
                \foreach \i in {2,4,...,\numcols} {
                        \addplot table[x index=0, y index=\i, col sep=comma] {\weightedSimData};
                        \pgfmathparse{(\i)/2}
                        \edef\agentIndex{\pgfmathresult}
                    }
            \end{groupplot}
        \end{tikzpicture}
        \caption{Consensus of MAS with optimized Laplacian matrix $\mathcal{L}_{\mathbf{w}^*}$}
        \label{fig: weighted_sim}
    \end{subfigure}
    \caption{}
\end{figure}

\begin{figure}[htbp]
    \centering
    \pgfplotstableread[col sep=comma]{pics/unweighted_inout.csv}{\unweightedInoutSimData}
    \pgfplotstablegetcolsof{\unweightedInoutSimData}
    \pgfplotstableread[col sep=comma]{pics/weighted_inout.csv}{\weightedInoutSimData}
    \pgfplotstablegetcolsof{\weightedInoutSimData}
    \pgfmathtruncatemacro{\numcols}{\pgfplotsretval-1}
    \pgfmathparse{(\numcols+1)/2}
    \edef\agentIndex{\pgfmathresult}
    \edef\legendYpos{1.1}
    \begin{subfigure}{.45\linewidth}
        \centering
        \begin{tikzpicture}[scale = 0.8]
            \begin{groupplot}[
                    group style={group size=1 by 2},
                    width=\linewidth,
                    height=0.5\linewidth,
                    cycle list={%
                            {blue, semithick},%
                            {red, semithick},%
                            {green, semithick},%
                            {orange, semithick},%
                            {purple, semithick},%
                            {brown, semithick},%
                            {pink, semithick},%
                            {gray, semithick},%
                            {cyan, semithick},%
                            {magenta, semithick},%
                            {lime, semithick},%
                            {olive, semithick},%
                            {teal, semithick},%
                            {violet, semithick},%
                            {yellow, semithick},%
                            {black, semithick},%
                            {darkgray, semithick},%
                            {lightgray, semithick}%
                        }
                ]
                \nextgroupplot[
                    xlabel={$k$},
                    ylabel={Disturbance $\xi_{i}$},
                    xmin=0,
                    ymax=2.5,
                    ymin=-2.5,
                    enlarge x limits=false,
                    axis lines=box,
                    xtick pos =left,
                    ytick pos =bottom,
                    legend style={
                            at={(0.5,\legendYpos)},  
                            anchor=south,     
                            legend columns=4 
                        },
                    clip=false,
                ]

                \foreach \i in {1,2,...,\agentIndex} {
                        \addplot table[x index=0, y index=\i, col sep=comma] {\unweightedInoutSimData};
                        \pgfmathparse{(\i+1)/2}
                        \edef\agentIndex{\pgfmathresult}
                        \addlegendentryexpanded{Agent ${\pgfmathprintnumber[fixed,precision=0]{\agentIndex}}$}
                    }
                \nextgroupplot[
                    xlabel={$k$},
                    ylabel={General output $z_{i}$},
                    xmin=0,
                    ymax=4,
                    ymin=-4,
                    enlarge x limits=false,
                    axis lines=box,
                    xtick pos =left,
                    ytick pos =bottom,
                    clip=false,
                ]
                \pgfmathparse{\agentIndex+1}
                \edef\agentIndexplusone{\pgfmathresult}
                \foreach \i in {\agentIndexplusone,...,\numcols} {
                        \addplot table[x index=0, y index=\i, col sep=comma] {\unweightedInoutSimData};
                    }
            \end{groupplot}
        \end{tikzpicture}
        \caption{Input and output of MAS with unweighted Laplacian matrix $\mathcal{L}$}
        \label{fig: unweighted_sim_inout}
    \end{subfigure}
    \begin{subfigure}{.45\linewidth}
        \centering
        \begin{tikzpicture}[scale =0.8]
            \begin{groupplot}[
                    group style={group size=1 by 2},
                    width=\linewidth,
                    height=0.5\linewidth,
                    cycle list={%
                            {blue, semithick},%
                            {red, semithick},%
                            {green, semithick},%
                            {orange, semithick},%
                            {purple, semithick},%
                            {brown, semithick},%
                            {pink, semithick},%
                            {gray, semithick},%
                            {cyan, semithick},%
                            {magenta, semithick},%
                            {lime, semithick},%
                            {olive, semithick},%
                            {teal, semithick},%
                            {violet, semithick},%
                            {yellow, semithick},%
                            {black, semithick},%
                            {darkgray, semithick},%
                            {lightgray, semithick}%
                        }
                ]
                \nextgroupplot[
                    xlabel={$k$},
                    ylabel={Disturbance $\xi_{i}$},
                    xmin=0,
                    ymax=2.5,
                    ymin=-2.5,
                    enlarge x limits=false,
                    axis lines=box,
                    xtick pos =left,
                    ytick pos =bottom,
                    legend style={
                            at={(0.5,\legendYpos)},  
                            anchor=south,     
                            legend columns=4 
                        },
                    clip=false,
                ]

                \foreach \i in {1,2,...,\agentIndex} {
                        \addplot table[x index=0, y index=\i, col sep=comma] {\weightedInoutSimData};
                        \pgfmathparse{(\i+1)/2}
                        \edef\agentIndex{\pgfmathresult}
                        \addlegendentryexpanded{Agent ${\pgfmathprintnumber[fixed,precision=0]{\agentIndex}}$}
                    }
                \nextgroupplot[
                    xlabel={$k$},
                    ylabel={General output $z_{i}$},
                    xmin=0,
                    ymax=4,
                    ymin=-4,
                    enlarge x limits=false,
                    axis lines=box,
                    xtick pos =left,
                    ytick pos =bottom,
                    clip=false,
                ]
                \pgfmathparse{\agentIndex+1}
                \edef\agentIndexplusone{\pgfmathresult}
                \foreach \i in {\agentIndexplusone,...,\numcols} {
                        \addplot table[x index=0, y index=\i, col sep=comma] {\weightedInoutSimData};
                    }
            \end{groupplot}
        \end{tikzpicture}
        \caption{Input and output of MAS with weighted Laplacian matrix $\mathcal{L}$}
        \label{fig: optimal weighted_sim_inout}
    \end{subfigure}
    \caption{}
\end{figure}
\section{Discussion and improvement}
\label{sec: Discussion and improvement}

\subsection{On edge weight optimization}
\label{sec: Edge weights}
The algorithm can also be applied to edge weight optimization, as well. Previous studies \cite{Gennaro2016,Shafi2012} focus on distributedly bounding the Fiedler eigenvalue of the edge weighted Laplacian matrix from below. Adopting a similar structure as in \cref{sec: Node weights}, problem \cref{eq: modified finite conditional number problem for node weighted Laplacian} can also be optimized by the distributed algorithm. The representation of the edge weighted Laplacian matrix is given by:
\begin{gather*}
    \mathcal{L}_\mathbf{w}=\mathcal{B}^T \diag(\mathbf{w})\mathcal{B}^T,
\end{gather*}
in which $\mathcal{B}$ is the incidence matrix of the graph \cite{Mesbahi2010}. With the above representation, the differential of a simple eigenvalue of the edge weight that connects the $i$th and $j$th nodes can be expressed as:
\begin{gather}
    \label{eq: partial derivative of the eigenvalue for edge weighted Laplacian}
    \frac{\partial}{\partial w_{ij}}\lambda(\mathcal{L}_\mathbf{w}) =v^T\left(\mathcal{B}^T \frac{\partial}{\partial w_{ij}} \diag(\mathbf{w})\mathcal{B}\right)v =\left(
    v_i-v_j
    \right)^2.
\end{gather}
where $v$ is the unit eigenvector corresponds to the simple eigenvalue $\lambda(\mathcal{L}_\mathbf{w})$. Since the differential of $w_{ij}$ only requires the local information of the $i$th and $j$th entries of the corresponding eigenvector, the gradient of the same Lagrangian, which is composed of the largest and Fiedler eigenvalues, can also be computed in a distributed manner:
\begin{gather}
    \label{eq: partial derivative of the augmented Lagrangian for edge weighted Laplacian}
    \begin{aligned}
        \frac{\partial}{\partial w_{ij}}\mathscr{L}_\rho(\mathbf{w}, \sigma^k)
         & =\frac{\partial}{\partial w_{ij}}\lambda_N(\mathcal{L}_\mathbf{w})+\frac{\partial}{\partial w_{ij}}\left\{
        \frac{\rho}{2}\left[\max \left\{0,1-\lambda_2(\mathcal{L}_\mathbf{w})+\frac{\sigma^k}{\rho}\right\}\right]^2
        \right\}                                                                                                      \\
         & = \left(
        \bar{v}_i-\bar{v}_j
        \right)^2-\rho\left[\max \left\{0,1-\lambda_2(\mathcal{L}_\mathbf{w})+\frac{\sigma^k}{\rho}\right\}\right]\left(
        \underline{v}_i-\underline{v}_j
        \right)^2.
    \end{aligned}
\end{gather}
where $\bar{v}$ and $\underline{v}$ are the unit eigenvectors corresponding to the largest and Fiedler eigenvalues, respectively. Equation \cref{eq: partial derivative of the augmented Lagrangian for edge weighted Laplacian} has a similar distributed structure as in the node weight case \cref{eq: partial derivative of the augmented Lagrangian}, which implies that the update law for the outer iteration remains fully distributed. The update law for the edge weight $w_{ij}$ is computed on both ends, i.e., the $i$th and $j$th agents. Each agent performs the same number of updates as its degree, which, for comparison, is always greater than or equal to the number of updates for the node weight optimization in \cref{sec: Node weights}. Similar distributed eigen-pair estimation in \cref{subsec: Eigen-pair Estimation} can be directly applied here, enabling a fully distributed implementation of the edge weight optimization algorithm.

\subsection{On non-simple eigenvalues}
\label{sec: non-simple eigenvalues}
In \cref{subsec: distributed node weight update}, only the simple largest and Fiedler eigenvalues of the Laplacian matrix are considered.
It is important to note that the proposed algorithm can be extended to handle non-simple eigenvalues as well. The biggest challenge is that at the point where the eigenvalues are not simple \cite{Clarke1990}, the differential of the largest (Fiedler) eigenvalue is not well defined, e.g., see the counterexamples in the paper \cite{Magnus1985}. To address this issue, the subgradient methods can be employed to obtain the subderivative of the largest (Fiedler) eigenvalue.

As proved in \cref{sec: Optimization Approaches.1a}, the function $\mathscr{L}_\rho(\mathbf{w}, \sigma^k)$ is convex in $\mathbf{w}$. Therefore, the subderivative of the optimization problem \cref{eq: overall iterative algorithm for node weighted Laplacian 1} can be expressed as (see Theorem 4.1.1 in paper \cite{Hiriart-Urruty1993}):
\begin{gather}
    \label{eq: subgradient of the optimization problem}
    \partial\lambda_N(C_w)={ \partial} \lambda_N\left(\mathcal{L}_{\mathbf{w}}\right)+{ \partial}\left\{
    \frac{\rho}{2}\left[\max \left\{0,1-\lambda_2\left(\mathcal{L}_{\mathbf{w}}\right)+\frac{\sigma^k}{\rho}\right\}\right]^2
    \right\}
\end{gather}

\begin{itemize}
    \item For $\lambda_N(\hat{\mathcal{L}}_\mathbf{w})$ (Example 2.8.7 in paper \cite{Clarke1990}):

          Note the definition of the largest eigenvalue:
          \begin{gather*}
              \lambda_N(\hat{\mathcal{L}}_\mathbf{w})=\sup_{\mathbf{x}\in\mathbb{R}^N,\|\mathbf{x}\|\leq 1}{f_x(\mathbf{w})=\mathbf{x}^T\hat{\mathcal{L}}_\mathbf{w}\mathbf{x}}.
          \end{gather*}

          The corresponding subgradient is, without any surprise:
          \begin{gather*}
              \partial \lambda_N(\hat{\mathcal{L}}_\mathbf{w})=\mathbf{Co} \cup \{
              \partial_{\mathbf{w}} f_x(\mathbf{w})\mid f_x(\mathbf{w})=\lambda_N(\hat{\mathcal{L}}_\mathbf{w}) \},\\
              \Rightarrow\partial \lambda_N(\hat{\mathcal{L}}_\mathbf{w})=\mathbf{Co}\left(
              \begin{bmatrix}
                  x^T\frac{\partial \hat{\mathcal{L}}_\mathbf{w}}{\partial w_1}x & \cdots & x^T\frac{\partial \hat{\mathcal{L}}_\mathbf{w}}{\partial w_N}x
              \end{bmatrix}\mid \hat{\mathcal{L}}_\mathbf{w}x=\lambda_N(\hat{\mathcal{L}}_\mathbf{w})x,\|x\|=1
              \right).
          \end{gather*}
          where $\mathbf{Co}$ means the convex hull of the set. In other words, any eigenvector $v\in\mathbb{R}^n$ corresponding to $\lambda_N$ can be considered when calculating the subgradient of the largest eigenvalue.
    \item For $ \frac{\rho}{2}\left[\max \left\{0,1-\lambda_2\left(\hat{\mathcal{L}}_{\mathbf{w}}\right)+\frac{\sigma^k}{\rho}\right\}\right]^2$:

          We benefit from the subgradient of the largest eigenvalue found above. The following equation holds all the time:
          \begin{gather*}
              1-\lambda_2\left(\hat{\mathcal{L}}_{\mathbf{w}}\right)+\frac{\sigma^k}{\rho}=\sup_{\begin{array}{c}
                      \mathbf{x}\in\mathbb{R}^N,\|\mathbf{x}\|\leq 1, \\
                      x^T\diag(\mathbf{w})^{\frac{1}{2}}\mathbf{1}_N=0
                  \end{array}}{h_x(\mathbf{w})=\left(
                  1-\frac{\sigma^k}{\rho}
                  \right)x^Tx+\mathbf{x}^T\hat{\mathcal{L}}_\mathbf{w}\mathbf{x}}.
          \end{gather*}
          Similarly, the subgradient of the function $1-\lambda_2\left(\hat{\mathcal{L}}_{\mathbf{w}}\right)+\frac{\sigma^k}{\rho}$ is:
          \begin{gather}
              \label{eq:subgradient for lambda2}
              \begin{aligned}
                  \partial \left[
                      1-\lambda_2\left(\hat{\mathcal{L}}_{\mathbf{w}}\right)+\frac{\sigma^k}{\rho}
                  \right] & =\mathbf{Co} \cup \{
                  \partial_{\mathbf{w}} h_x(\mathbf{w})\mid h_x(\mathbf{w})=1-\frac{\sigma^k}{\rho}-\lambda_2(\hat{\mathcal{L}}_\mathbf{w}) \}, \\
                          & =\mathbf{Co} \left\{
                  \begin{bmatrix}
                      x^T\frac{\partial \hat{\mathcal{L}}_\mathbf{w}}{\partial w_1}x & \cdots & x^T\frac{\partial \hat{\mathcal{L}}_\mathbf{w}}{\partial w_N}x
                  \end{bmatrix}\mid \hat{\mathcal{L}}_\mathbf{w}x=\lambda_2(\hat{\mathcal{L}}_\mathbf{w})x,\|x\|=1
                  \right\}
              \end{aligned}
          \end{gather}

          By point-wise maxima (Theorem 2.8.6 in \cite{Clarke1990}) and chain rule (Theorem 2.3.9 in \cite{Clarke1990}), the subgradient of the function $ \frac{\rho}{2}\left[\max \left\{0,1-\lambda_2\left(\hat{\mathcal{L}}_{\mathbf{w}}\right)+\frac{\sigma^k}{\rho}\right\}\right]^2$ is easily obtained as:
          \begin{gather*}
              \partial \left[
                  \frac{\rho}{2}\left[\max \left\{0,1-\lambda_2\left(\hat{\mathcal{L}}_{\mathbf{w}}\right)+\frac{\sigma^k}{\rho}\right\}\right]^2
                  \right]=\rho\max \left\{0,1-\lambda_2\left(\hat{\mathcal{L}}_{\mathbf{w}}\right)+\frac{\sigma^k}{\rho}\right\}
              \partial \left[
                  1-\lambda_2\left(\hat{\mathcal{L}}_{\mathbf{w}}\right)+\frac{\sigma^k}{\rho}
                  \right].
          \end{gather*}
          Similar to the largest eigenvalue, with the expression of \cref{eq:subgradient for lambda2}, any eigenvector $v\in\mathbb{R}^n$ corresponding to $\lambda_2$ can be used to calculate the subgradient of the function $ \partial \left[
                  1-\lambda_2\left(\hat{\mathcal{L}}_{\mathbf{w}}\right)+\frac{\sigma^k}{\rho}
                  \right].$
\end{itemize}

\subsection{Restriction on the graph}

As discussed in the Introduction and \cref{subsubsec: Average Consensus in MAS}, ensuring the Laplacian matrix has real eigenvalues is essential for meaningful analysis. While this is typically achieved by assuming the graph is undirected (\cref{assumption: undirected connected graph}), the optimization framework can be extended to certain classes of directed graphs where the Laplacian still has real eigenvalues. For example, acyclic graphs yield upper triangular Laplacians with real spectra, and leader-follower graphs with undirected follower subgraphs also satisfy this property. In such cases, the finite condition number remains well-defined, and the proposed methodologies can be adapted accordingly. The Laplacian for a directed graph can be constructed as:
\begin{gather*}
    \mathcal{L}_\mathbf{w}^d=\sum_{\begin{array}{c}
            \ell\in\{1,\cdots,|\mathcal{E}|\} \\
            e_\ell=(v_i,v_j)
        \end{array}}\mathbbm{1}_i w_{\ell}\mathcal{B}^T(e_\ell),
\end{gather*}
where $\mathbbm{1}_i$ is the indicator vector with $1$ at the $i$th entry. For directed graphs, analogous conditions to connectedness as in \cref{assumption: undirected connected graph} can be extended to imposing the existence of a spanning tree.
\subsection{Robustness to the node and link changes}
In the proposed distributed optimization framework, the only global graph information each agent requires is the graph diameter $d(\mathcal{G})$, which can be distributed among agents using max-consensus in $2d(\mathcal{G})$ steps \cite{Garin2012}. This minimal requirement allows the algorithm to flexibly accommodate dynamic changes in the network topology, such as the failure, removal or addition of nodes and edges. When a node or link change occurs, the network may become partitioned into several connected components. The preservation of overall connectedness as required by \cref{assumption: undirected connected graph} is not strictly necessary after such structural changes. Instead, each resulting connected component, by virtue of its own connectivity, can independently continue the distributed optimization process.

Upon detecting a structural change, during any operation, agents would transition into a safe mode to obtain the new diameter of the current connected component. Once the new diameter is established, agents can safely resume the distributed optimization with the controller best suited for the new topology. The algorithm's reliance on only local communication and minimal global information ensures that it remains robust and adaptable to network disruptions and reconfiguration. This property is particularly valuable in large-scale MASs, where network topology may change unpredictably due to failures, mobility, or environmental factors.

\subsection{Improvement for inner iteration}
The Lagrangian update and outer iteration in \cref{subsubsec: Node weight optimization} adhere to the conventional centralized solver, and thus have limited scope for enhancement. However, the inner iteration presents significant potential for optimization as the current algorithm involves pausing the gradient descent, pending the convergence of the inner iteration. This approach is suboptimal, as the convergence rate of the inner iteration critically influences the overall efficiency of the algorithm. The following are some potential improvements to the inner iteration process:

\subsubsection{Enhancements for Power Iteration Efficiency}
In the context of \cref{eq: central distributed power iteration using infty norm,eq: central Fiedler eigen-pair estimator}, a significant portion of the computational time is allocated to the convergence of the max-consensus algorithm. For instance, for the vector $x(T)$, within each $d({\mathcal{G}})+2$ cycle, $d({\mathcal{G}})$ steps are dedicated to achieving max-consensus, while only two steps are utilized for matrix multiplication and normalization. This allocation is suboptimal, as the primary function of max-consensus is merely to normalize the vector. We propose the algorithms \cref{eq: central distributed power iteration using infty norm,eq: central Fiedler eigen-pair estimator} to facilitate understanding. In practice, it is not necessary to halt the power iteration for max-consensus after each matrix multiplication.

The normalization step in power iteration serves the purpose of preventing the vector entries from diverging to infinity or diminishing to negligible values. Only when the vector tends to converge is the introduction of max-consensus after each power iteration necessary. Otherwise, it is sufficient to divide the resulting vector by a constant value $\delta$. This $\delta$ should capture some sense of the norm of the vector, but does not need to be precise to the exact value; otherwise, the time-consuming max-consensus will be performed after each multiplication. The following idea illustrates one way to improve the efficiency of the power iteration:
\begin{gather*}
    \begin{aligned}
        x(T+1)         & =\frac{1}{\delta(T)}\mathcal{L}_\mathbf{w}x(T), \\
        \hat{x}_i(T+1) & = \left\{
        \begin{array}{lc}
            \max_{j\in\mathcal{N}_i}\left\{
            \hat{x}_j(T),\hat{x}_i(T)
            \right\},              & T \text{ mod } d(\mathcal{G}+1)\neq 1 \\
            \left|{x}_i(T)\right|, & T \text{ mod } d(\mathcal{G}+1)=1,
        \end{array}
        \right. , \forall i\in\mathcal{V}                                \\
        \delta(T+1)    & =\left\{
        \begin{array}{lc}
            \delta(T), & T \text{ mod } d(\mathcal{G}+1)\neq 0 \\
            \hat{x}_i  & T \text{ mod } d(\mathcal{G}+1)=0.
        \end{array}
        \right.
    \end{aligned}
\end{gather*}

When $T \text{ mod } d(\mathcal{G}+1)=0$, the parallel computation of each entry $\hat{x}_i$ converges to the infinity norm of $x(T-d(\mathcal{G}))$, which distributes the $\delta$ to each agent. This $\delta$ captures the infinity norm of $x$ $d(\mathcal{G})$ steps back. While it is not precise, it is sufficient to prevent the vector from diverging to infinity or diminishing to negligible values. This improvement can significantly reduce the time spent on max-consensus, thereby accelerating the convergence of the power iteration. When the two consecutive $\delta$ are close enough, agents can switch back to the original algorithm \cref{eq: central distributed power iteration using infty norm} to ensure the precision of the eigenvector.

\subsubsection{Selection of Initial Values for Eigenvalue Estimation}
The convergence of the distributed power iteration algorithms in \cref{eq: central distributed power iteration using infty norm,eq: central Fiedler eigen-pair estimator} relies on the standard assumption that the initial vectors $x(0)$ and $y(0)$ are not orthogonal to the respective target eigenvectors $\bar{u}$ and $\underline{u}$. In practice, random initialization is typically sufficient, as the probability of exact orthogonality is zero. Nevertheless, in the context of iterative optimization with small gradient descent steps, the structure of the problem allows for a more effective initialization strategy. Specifically, by initializing the eigenvector estimation at each iteration with the eigenvector estimates obtained from the previous step, the algorithm exploits the continuity of the eigenstructure under small perturbations of the weights. This reuse of previous estimates can significantly accelerate convergence and improve the numerical stability of the distributed power iteration process.

\section{Conclusions}
\label{sec: conclusions}

In this paper, we have presented a distributed optimization framework for the finite condition number of node-weighted Laplacian matrices in MASs. The proposed algorithm is designed to optimize the node weights in a distributed manner, ensuring that the agents can achieve consensus while minimizing the finite condition number of the Laplacian matrix. The algorithm is based on the augmented Lagrangian method and employs a projected gradient descent approach to ensure the positivity of the node weights.
The convergence of the algorithm is guaranteed under the assumption that the underlying graph is undirected and connected. The proposed method has been validated through numerical simulations, demonstrating its effectiveness in achieving consensus and optimizing the finite condition number of the Laplacian matrix.
\bmsection*{Author contributions}
Both authors contributed to the study conception and design. Simulation and
analysis were performed by Yicheng Xu.






\bibliography{wileyNJD-AMA,Uci}

\begin{thebibliography}{10}
\providecommand \doibase [0]{http://dx.doi.org/}%

\bibitem{Zhang2024a}
Zhang Z, Chen Z. Consensus of Discrete-Time Multi-Agent Systems with Transient Performance Constraints. {\it IEEE Transactions on Automatic Control.} 2024.
\newblock \href {\doibase 10.1109/TAC.2024.3390868} {doi: 10.1109/TAC.2024.3390868}

\bibitem{zhang2024}
Zhang K, Li ZY, Zhou B. Fully Distributed Output Regulation of Linear Discrete-Time Multiagent Systems with Time-Varying Topology and Delays. {\it Automatica.} 2024\string;167.
\newblock \href {\doibase 10.1016/j.automatica.2024.111755} {doi: 10.1016/j.automatica.2024.111755}

\bibitem{Gao2024}
Gao C, Wang Z, He X, Liu Y, Yue D. Differentially {{Private Consensus Control}} for {{Discrete-Time Multiagent Systems}}: {{Encoding-Decoding Schemes}}. {\it IEEE Transactions on Automatic Control.} 2024\string;69(8)\string:5554--5561.
\newblock \href {\doibase 10.1109/TAC.2024.3367803} {doi: 10.1109/TAC.2024.3367803}

\bibitem{Mesbahi2010}
Mesbahi M, Egerstedt M. {\it Graph {{Theoretic Methods}} in {{Multiagent Networks}}}.
\newblock Princeton University Press, 2010

\bibitem{Liu2019}
Liu J, Huang J. A {{Spectral Property}} of a {{Graph Matrix}} and {{Its Application}} to the {{Leader-Following Consensus}} of {{Discrete-Time Multiagent Systems}}. {\it IEEE Transactions on Automatic Control.} 2019\string;64(6)\string:2583--2589.
\newblock \href {\doibase 10.1109/TAC.2018.2867167} {doi: 10.1109/TAC.2018.2867167}

\bibitem{Su2022}
Su Y, Lee TC. Output {{Feedback Synthesis}} of {{Multiagent Systems With Jointly Connected Switching Networks}}: {{A Separation Principle Approach}}. {\it IEEE Transactions on Automatic Control.} 2022\string;67(2)\string:941--948.
\newblock \href {\doibase 10.1109/TAC.2021.3077352} {doi: 10.1109/TAC.2021.3077352}

\bibitem{Griparic2022}
Griparic K, Polic M, Krizmancic M, Bogdan S. Consensus-{{Based Distributed Connectivity Control}} in {{Multi-Agent Systems}}. {\it IEEE Transactions on Network Science and Engineering.} 2022\string;9(3)\string:1264--1281.
\newblock \href {\doibase 10.1109/TNSE.2021.3139045} {doi: 10.1109/TNSE.2021.3139045}

\bibitem{Hengster-Movric2015}
{Hengster-Movric} K, Lewis FL, Sebek M. Distributed Static Output-Feedback Control for State Synchronization in Networks of Identical {{LTI}} Systems. {\it Automatica.} 2015\string;53\string:282--290.
\newblock \href {\doibase 10.1016/j.automatica.2014.12.015} {doi: 10.1016/j.automatica.2014.12.015}

\bibitem{HongweiZhang2011}
{Hongwei Zhang} , Lewis FL, Das A. Optimal {{Design}} for {{Synchronization}} of {{Cooperative Systems}}: {{State Feedback}}, {{Observer}} and {{Output Feedback}}. {\it IEEE Transactions on Automatic Control.} 2011\string;56(8)\string:1948--1952.
\newblock \href {\doibase 10.1109/TAC.2011.2139510} {doi: 10.1109/TAC.2011.2139510}

\bibitem{Li2010}
{Zhongkui Li} , {Zhisheng Duan} , {Guanrong Chen} , {Lin Huang} . Consensus of {{Multiagent Systems}} and {{Synchronization}} of {{Complex Networks}}: {{A Unified Viewpoint}}. {\it IEEE Transactions on Circuits and Systems I: Regular Papers.} 2010\string;57(1)\string:213--224.
\newblock \href {\doibase 10.1109/TCSI.2009.2023937} {doi: 10.1109/TCSI.2009.2023937}

\bibitem{Giordano2016}
Giordano G, Blanchini F, Franco E, Mardanlou V, Montessoro PL. The {{Smallest Eigenvalue}} of the {{Generalized Laplacian Matrix}}, with {{Application}} to {{Network-Decentralized Estimation}} for {{Homogeneous Systems}}. {\it IEEE Transactions on Network Science and Engineering.} 2016\string;3(4)\string:312--324.
\newblock \href {\doibase 10.1109/TNSE.2016.2600026} {doi: 10.1109/TNSE.2016.2600026}

\bibitem{GuoxiangGu2012}
{Guoxiang Gu} , Marinovici L, Lewis FL. Consensusability of {{Discrete-Time Dynamic Multiagent Systems}}. {\it IEEE Transactions on Automatic Control.} 2012\string;57(8)\string:2085--2089.
\newblock \href {\doibase 10.1109/TAC.2011.2179431} {doi: 10.1109/TAC.2011.2179431}

\bibitem{You2011}
{Keyou You} , {Lihua Xie} . Network {{Topology}} and {{Communication Data Rate}} for {{Consensusability}} of {{Discrete-Time Multi-Agent Systems}}. {\it IEEE Transactions on Automatic Control.} 2011\string;56(10)\string:2262--2275.
\newblock \href {\doibase 10.1109/TAC.2011.2164017} {doi: 10.1109/TAC.2011.2164017}

\bibitem{Feng2020}
Feng T, Zhang J, Zhang H. Consensusability of Discrete-Time Linear Multi-Agent Systems with Multiple Inputs. {\it Neurocomputing.} 2020\string;383\string:183--193.
\newblock \href {\doibase 10.1016/j.neucom.2019.11.040} {doi: 10.1016/j.neucom.2019.11.040}

\bibitem{Xiao2004}
Xiao L, Boyd S. Fast Linear Iterations for Distributed Averaging. {\it Systems \& Control Letters.} 2004\string;53(1)\string:65--78.
\newblock \href {\doibase 10.1016/j.sysconle.2004.02.022} {doi: 10.1016/j.sysconle.2004.02.022}

\bibitem{Xu2024a}
Xu Y, Jabbari F. Discrete-{{Time Leader-following Multi-Agent Systems}}: {{Saturation Constraints}} and {{Event-Triggered Control}}. {\it IEEE Transactions on Control of Network Systems.} 2024\string:1--12.
\newblock \href {\doibase 10.1109/TCNS.2024.3516578} {doi: 10.1109/TCNS.2024.3516578}

\bibitem{Liang2023}
Liang H, Chang Z, Ahn CK. Hybrid {{Event-Triggered Intermittent Control}} for {{Nonlinear Multi-Agent Systems}}. {\it IEEE Transactions on Network Science and Engineering.} 2023\string;10(4)\string:1975--1984.
\newblock \href {\doibase 10.1109/TNSE.2023.3237256} {doi: 10.1109/TNSE.2023.3237256}

\bibitem{SORRENTINO2007}
SORRENTINO {\relax FRANCESCO}, DI~BERNARDO {\relax MARIO}, GAROFALO {\relax FRANCO}. {{SYNCHRONIZABILITY AND SYNCHRONIZATION DYNAMICS OF WEIGHED AND UNWEIGHED SCALE FREE NETWORKS WITH DEGREE MIXING}}. {\it International Journal of Bifurcation and Chaos.} 2007\string;17(07)\string:2419--2434.
\newblock \href {\doibase 10.1142/S021812740701849X} {doi: 10.1142/S021812740701849X}

\bibitem{Barahona2002}
Barahona M, Pecora LM. Synchronization in {{Small-World Systems}}. {\it Physical Review Letters.} 2002\string;89(5)\string:054101.
\newblock \href {\doibase 10.1103/PhysRevLett.89.054101} {doi: 10.1103/PhysRevLett.89.054101}

\bibitem{Kim2014}
Kim Y. Optimal Modification of Dynamical Network Topology. {\it IFAC Proceedings Volumes.} 2014\string;47(3)\string:10356--10360.
\newblock \href {\doibase 10.3182/20140824-6-ZA-1003.00132} {doi: 10.3182/20140824-6-ZA-1003.00132}

\bibitem{Cao2025}
Cao H, Zhang HT, Xie L. Synchronization {{Acceleration}} of {{Networked Systems}} via {{Edge Addition}} to {{Single-Root Weighted Digraphs}}. {\it IEEE Transactions on Automatic Control.} 2025\string;70(3)\string:1730--1744.
\newblock \href {\doibase 10.1109/TAC.2024.3466871} {doi: 10.1109/TAC.2024.3466871}

\bibitem{Tam2020}
Tam E, Dunson D. Fiedler {{Regularization}}: {{Learning Neural Networks}} with {{Graph Sparsity}}.  2020.

\bibitem{Ogiwara2017}
Ogiwara K, Fukami T, Takahashi N. Maximizing Algebraic Connectivity in the Space of Graphs with a Fixed Number of Vertices and Edges. {\it IEEE Transactions on Control of Network Systems.} 2017\string;4(2)\string:359--368.
\newblock \href {\doibase 10.1109/TCNS.2015.2503561} {doi: 10.1109/TCNS.2015.2503561}

\bibitem{Nagarajan2015}
Nagarajan H, Rathinam S, Darbha S. On Maximizing Algebraic Connectivity of Networks for Various Engineering Applications. In: IEEE 2015\string:1626--1632

\bibitem{Somisetty2025}
Somisetty N, Nagarajan H, Darbha S. Optimal {{Robust Network Design}}: {{Formulations}} and {{Algorithms}} for {{Maximizing Algebraic Connectivity}}. {\it IEEE Transactions on Control of Network Systems.} 2025\string;12(1)\string:918--929.
\newblock \href {\doibase 10.1109/TCNS.2024.3431408} {doi: 10.1109/TCNS.2024.3431408}

\bibitem{Tavasoli2024}
Tavasoli A, Shakeri H, Ardjmand E, Rahman S. The Art of Interconnections: {{Achieving}} Maximum Algebraic Connectivity in Multilayer Networks. {\it Network Science.} 2024\string;12(3)\string:261--288.
\newblock \href {\doibase 10.1017/nws.2024.9} {doi: 10.1017/nws.2024.9}

\bibitem{Shafi2012}
Shafi SY, Arcak M, El~Ghaoui L. Graph Weight Allocation to Meet Laplacian Spectral Constraints. {\it IEEE Transactions on Automatic Control.} 2012\string;57(7)\string:1872--1877.
\newblock \href {\doibase 10.1109/TAC.2011.2181795} {doi: 10.1109/TAC.2011.2181795}

\bibitem{Kempton2017}
Kempton LC, Herrmann G, {di~Bernardo} M. Distributed Optimisation and Control of Graph {{Laplacian}} Eigenvalues for Robust Consensus via an Adaptive Multilayer Strategy. {\it International Journal of Robust and Nonlinear Control.} 2017\string;27(9)\string:1499--1525.
\newblock \href {\doibase 10.1002/rnc.3808} {doi: 10.1002/rnc.3808}

\bibitem{Birgin2014}
Birgin EG, Mart{\'i}nez JM. {\it Practical {{Augmented Lagrangian Methods}} for {{Constrained Optimization}}}.
\newblock Philadelphia, PA: {Society for Industrial and Applied Mathematics}, 2014

\bibitem{Dimitri2015}
{Dimitri P. Bertsekas} . Convex {{Optimization Algorithms}}. tech. rep.,  2015.

\bibitem{Lucchese2015}
Lucchese R, Varagnolo D. Average Consensus via Max Consensus. In: . 28.  2015\string:58--63

\bibitem{Nejad2009}
Nejad BM, Attia SA, Raisch J. Max-Consensus in a Max-plus Algebraic Setting: {{The}} Case of Fixed Communication Topologies. In: IEEE 2009\string:1--7

\bibitem{Garin2012}
Garin F, Varagnolo D, Johansson KH. Distributed Estimation of Diameter, Radius and Eccentricities in Anonymous Networks. {\it IFAC Proceedings Volumes.} 2012\string;45(26)\string:13--18.
\newblock \href {\doibase 10.3182/20120914-2-US-4030.00040} {doi: 10.3182/20120914-2-US-4030.00040}

\bibitem{Corso1997}
Del~Corso GM. Estimating an {{Eigenvector}} by the {{Power Method}} with a {{Random Start}}. {\it SIAM Journal on Matrix Analysis and Applications.} 1997\string;18(4)\string:913--937.
\newblock \href {\doibase 10.1137/S0895479895296689} {doi: 10.1137/S0895479895296689}

\bibitem{Lin2010}
Lin F, Cohen WW. Power {{Iteration Clustering}}. In: Omnipress 2010; Madison, WI, USA\string:655--662.

\bibitem{Clarke1990}
Clarke FH. {\it Optimization and {{Nonsmooth Analysis}}}.
\newblock {Society for Industrial and Applied Mathematics}, 1990

\bibitem{Tran2021}
Tran QV, Trinh MH, Ahn HS. Discrete-{{Time Matrix-Weighted Consensus}}. {\it IEEE Transactions on Control of Network Systems.} 2021\string;8(4)\string:1568--1578.
\newblock \href {\doibase 10.1109/TCNS.2021.3068367} {doi: 10.1109/TCNS.2021.3068367}

\bibitem{Jadbabaie2003}
Jadbabaie A, {Jie Lin} , Morse A. Coordination of Groups of Mobile Autonomous Agents Using Nearest Neighbor Rules. {\it IEEE Transactions on Automatic Control.} 2003\string;48(6)\string:988--1001.
\newblock \href {\doibase 10.1109/TAC.2003.812781} {doi: 10.1109/TAC.2003.812781}

\bibitem{Gomez2013}
G{\'o}mez S, {D{\'i}az-Guilera} A, {G{\'o}mez-Garde{\~n}es} J, {P{\'e}rez-Vicente} CJ, Moreno Y, Arenas A. Diffusion {{Dynamics}} on {{Multiplex Networks}}. {\it Physical Review Letters.} 2013\string;110(2)\string:028701.
\newblock \href {\doibase 10.1103/PhysRevLett.110.028701} {doi: 10.1103/PhysRevLett.110.028701}

\bibitem{Zhang2011}
Zhang XD. The {{Laplacian}} Eigenvalues of Graphs: A Survey.  2011.

\bibitem{Horn2012}
Horn RA, Johnson CR. {\it Matrix {{Analysis}}}.
\newblock Cambridge University Press, 2012

\bibitem{Merris1998}
Merris R. Laplacian Graph Eigenvectors. {\it Linear Algebra and its Applications.} 1998\string;278(1-3)\string:221--236.
\newblock \href {\doibase 10.1016/S0024-3795(97)10080-5} {doi: 10.1016/S0024-3795(97)10080-5}

\bibitem{Masubuchi1998}
Masubuchi I, Ohara A, Suda N. {{LMI-based}} Controller Synthesis: A Unified Formulation and Solution. {\it International Journal of Robust and Nonlinear Control.} 1998\string;8(8)\string:669--686.
\newblock \href {\doibase 10.1002/(SICI)1099-1239(19980715)8:8<669::AID-RNC337>3.0.CO;2-W} {doi: 10.1002/(SICI)1099-1239(19980715)8:8<669::AID-RNC337>3.0.CO;2-W}

\bibitem{Scherer1996}
Scherer CW. Mixed {{H2}}/{{H}}{$\infty$} Control for Time-Varying and Linear Parametrically-Varying Systems. {\it International Journal of Robust and Nonlinear Control.} 1996\string;6(9-10)\string:929--952.
\newblock \href {\doibase 10.1002/(SICI)1099-1239(199611)6:9/10<929::AID-RNC260>3.0.CO;2-9} {doi: 10.1002/(SICI)1099-1239(199611)6:9/10<929::AID-RNC260>3.0.CO;2-9}

\bibitem{Magnus1985}
Magnus JR. On {{Differentiating Eigenvalues}} and {{Eigenvectors}}. {\it Econometric Theory.} 1985\string;1(2)\string:179--191.
\newblock \href {\doibase 10.1017/S0266466600011129} {doi: 10.1017/S0266466600011129}

\bibitem{Bertrand2013}
Bertrand A, Moonen M. Distributed Computation of the {{Fiedler}} Vector with Application to Topology Inference in Ad Hoc Networks. {\it Signal Processing.} 2013\string;93(5)\string:1106--1117.
\newblock \href {\doibase 10.1016/j.sigpro.2012.12.002} {doi: 10.1016/j.sigpro.2012.12.002}

\bibitem{Lu2007}
Lu M, Zhang Lz, Tian F. Lower Bounds of the {{Laplacian}} Spectrum of Graphs Based on Diameter. {\it Linear Algebra and its Applications.} 2007\string;420(2-3)\string:400--406.
\newblock \href {\doibase 10.1016/j.laa.2006.07.023} {doi: 10.1016/j.laa.2006.07.023}

\bibitem{Col2018}
Col LD, Tarbouriech S, Zaccarian L. {{H}}{$\infty$} Control Design for Synchronisation of Identical Linear Multi-Agent Systems. {\it International Journal of Control.} 2018\string;91(10)\string:2214--2229.
\newblock \href {\doibase 10.1080/00207179.2017.1336672} {doi: 10.1080/00207179.2017.1336672}

\bibitem{Gennaro2016}
De~Gennaro MC, Jadbabaie A. Decentralized {{Control}} of {{Connectivity}} for {{Multi-Agent Systems}}. In: IEEE 2006\string:3628--3633

\bibitem{Hiriart-Urruty1993}
{Hiriart-Urruty} JB, Lemar{\'e}chal C. {\it Convex {{Analysis}} and {{Minimization Algorithms I}}}. 305.
\newblock Berlin, Heidelberg: Springer Berlin Heidelberg, 1993

\bibitem{Luenberger2008}
Luenberger DG, Ye Y. {\it Linear and {{Nonlinear Programming}}}. 116.
\newblock New York, NY: Springer US, 2008

\bibitem{Ciarlet2013}
Ciarlet PG. {\it Linear and {{Nonlinear Functional Analysis}} with {{Applications}}}.
\newblock Philadelphia, PA: {Society for Industrial and Applied Mathematics}, 2013

\end{thebibliography}


\begin{thebibliography}{10}
\providecommand \doibase [0]{http://dx.doi.org/}%

\bibitem{Hirt1974}
Hirt CW, Amsden AA, Cook JL. An arbitrary {L}agrangian-{E}ulerian computing method for all flow speeds. {\it J {C}omput {P}hys.} 1974\string;14(3)\string:227--253.

\bibitem{Liska2010}
Liska R, Shashkov M, Vachal P, et al. Optimization-based synchronized flux-corrected conservative interpolation (remapping) of mass and momentum for arbitrary {L}agrangian-{E}ulerian methods. {\it J {C}omput {P}hys.} 2010\string;229(5)\string:1467--1497.

\bibitem{Taylor1937}
Taylor GI, Green AE. Mechanism of the production of small eddies from large ones. {\it P {R}oy {S}oc {L}ond {A} {M}at.} 1937\string;158(895)\string:499--521.
\newblock \url{https://doi.org/10.1098/rspa.1937.0036}, \url{http://rspa.royalsocietypublishing.org/content/158/895/499}.

\bibitem{Knupp1999}
Knupp PM. Winslow smoothing on two-dimensional unstructured meshes. {\it Eng {C}omput.} 1999\string;15\string:263--268.

\bibitem{Kamm2000}
Kamm J. Evaluation of the {S}edov-von {N}eumann-{T}aylor blast wave solution. Tech. Rep. Technical {R}eport LA-UR-00-6055, Los {A}lamos {N}ational {L}aboratory; The address:   2000.

\bibitem{Kucharik2003}
Kucharik M, Shashkov M, Wendroff B. An efficient linearity-and-bound-preserving remapping method. {\it J {C}omput {P}hys.} 2003\string;188(2)\string:462--471.

\bibitem{Blanchard2015}
Blanchard G, Loubere R. High-Order {C}onservative {R}emapping with a posteriori {MOOD} stabilization on polygonal meshes. Details on how published;  2015.
\newblock Accessed January 13, 2016. \url{https://hal.archives-ouvertes.fr/hal-01207156}, the {HAL} {O}pen {A}rchive, hal-01207156.

\bibitem{Burton2013}
Burton DE, Kenamond MA, Morgan NR, Carney TC, Shashkov MJ, Author AB. An intersection based {ALE} scheme {(xALE)} for cell centered hydrodynamics {(CCH)}. In: Talk at {M}ultimat 2013, {I}nternational {C}onference on {N}umerical {M}ethods for {M}ulti-{M}aterial {F}luid {F}lows. The Organization.  September 2--6, 2013; San {F}rancisco.
\newblock LA-UR-13-26756.2.

\bibitem{Berndt2011}
Berndt M, Breil J, Galera S, Kucharik M, Maire PH, Shashkov M. Two-step hybrid conservative remapping for multimaterial arbitrary {L}agrangian-{E}ulerian methods. {\it J {C}omput {P}hys.} 2011\string;230(17)\string:6664--6687.

\bibitem{Kucharik2012}
Kucharik M, Shashkov M. One-step hybrid remapping algorithm for multi-material arbitrary {L}agrangian-{E}ulerian methods. {\it J {C}omput {P}hys.} 2012\string;231(7)\string:2851--2864.

\bibitem{Breil2015}
Breil J, Alcin H, Maire PH. A swept intersection-based remapping method for axisymmetric {ReALE} computation. {\it Int {J} {N}umer {M}eth {F}l.} 2015\string;77(11)\string:694--706.
\newblock Fld.3996.

\bibitem{Barth1997}
Barth TJ. Numerical methods for gasdynamic systems on unstructured meshes. In:  Kroner D, Rohde C, Ohlberger M. \kern-2pt, eds. {\it An {I}ntroduction to {R}ecent {D}evelopments in {T}heory and {N}umerics for {C}onservation {L}aws, {P}roceedings of the {I}nternational {S}chool on {T}heory and {N}umerics for {C}onservation {L}aws}, 2~ed., Lecture {N}otes in {C}omputational {S}cience and {E}ngineering. Springer,  1997.

\bibitem{Lauritzen2011}
Lauritzen P, Erath C, Mittal R. On simplifying `incremental remap'-based transport schemes. {\it J {C}omput {P}hys.} 2011\string;230(22)\string:7957--7963.

\bibitem{Klima2017}
Klima M, Kucharik M, Shashkov M. Local error analysis and comparison of the swept- and intersection-based remapping methods. {\it Commun {C}omput {P}hys.} 2017\string;21(2)\string:526--558.

\bibitem{Dukowicz2000}
Dukowicz JK, Baumgardner JR. Incremental remapping as a transport/advection algorithm. {\it J {C}omput {P}hys.} 2000\string;160(1)\string:318--335.

\bibitem{Kucharik2011}
Kucharik M, Shashkov M. Flux-based approach for conservative remap of multi-material quantities in {2D} arbitrary {L}agrangian-{E}ulerian simulations. In:  Fo\v{r}t J, F{\"{u}}rst J, Halama J, Herbin R, Hubert F. \kern-2pt, eds. {\it Finite {V}olumes for {C}omplex {A}pplications {VI} {P}roblems \& {P}erspectives}, 1~ed., Springer {P}roceedings in {M}athematics. Springer,  2011\string:623--631.

\bibitem{Kucharik2014}
Kucharik M, Shashkov M. Conservative multi-material remap for staggered multi-material arbitrary {L}agrangian-{E}ulerian methods. {\it J {C}omput {P}hys.} 2014\string;258\string:268--304.

\bibitem{Loubere2005}
Loubere R, Shashkov M. A subcell remapping method on staggered polygonal grids for arbitrary-{L}agrangian-{E}ulerian methods. {\it J {C}omput {P}hys.} 2005\string;209(1)\string:105--138.

\bibitem{Caramana1998}
Caramana EJ, Shashkov MJ. Elimination of artificial grid distortion and hourglass-type motions by means of {L}agrangian subzonal masses and pressures. {\it J {C}omput {P}hys.} 1998\string;142(2)\string:521--561.

\bibitem{Hoch2009}
Hoch P. An arbitrary {L}agrangian-{E}ulerian strategy to solve compressible fluid flows. Tech. Rep. Technical {R}eport, CEA; The address:   2009.
\newblock Accessed January 13, 2016. HAL: hal-00366858. https://hal.archives-ouvertes.fr/docs/00/36/68/58/PDF/ale2d.pdf.

\bibitem{Shashkov1996}
Shashkov M. {\it Conservative {F}inite-{D}ifference {M}ethods on {G}eneral {G}rids}.
\newblock CRC {P}ress, 1996.

\bibitem{Benson1992}
Benson DJ. Computational methods in {L}agrangian and {E}ulerian hydrocodes. {\it Comput {M}ethod {A}ppl {M}.} 1992\string;99(2--3)\string:235--394.

\bibitem{Margolin2003}
Margolin LG, Shashkov M. Second-order sign-preserving conservative interpolation (remapping) on general grids. {\it J {C}omput {P}hys.} 2003\string;184(1)\string:266--298.

\bibitem{Kenamond2013}
Kenamond MA, Burton DE. Exact intersection remapping of multi-material domain-decomposed polygonal meshes. In: Talk at {M}ultimat 2013, {I}nternational {C}onference on {N}umerical {M}ethods for {M}ulti-{M}aterial {F}luid {F}lows. The Organization.  September 2--6, 2013; San {F}rancisco.
\newblock LA-UR-13-26794.

\bibitem{Dukowicz1984}
Dukowicz J. Conservative rezoning (remapping) for general quadrilateral meshes. {\it J {C}omput {P}hys.} 1984\string;54(3)\string:411--424.

\bibitem{Margolin2002}
Margolin LG, Shashkov M. Second-order sign-preserving remapping on general grids. Tech. Rep. Technical Report LA-UR-02-525, Los {A}lamos {N}ational {L}aboratory; The address:   2002.

\bibitem{Mavriplis2003}
Mavriplis DJ. Revisiting the least-squares procedure for gradient reconstruction on unstructured meshes. In: AIAA 2003-3986. 16th {AIAA} {C}omputational {F}luid {D}ynamics {C}onference. The organization.  June 23--26, 2003; Orlando, {F}lorida.

\bibitem{Scovazzi2008}
Scovazzi G, Love E, Shashkov M. Multi-scale {L}agrangian shock hydrodynamics on {Q1/P0} finite elements: {T}heoretical framework and two-dimensional computations. {\it Comput {M}ethod {A}ppl {M}.} 2008\string;197(9--12)\string:1056--1079.

\end{thebibliography}


\appendix
\bmsection{Optimization Approaches}%
\label{sec: Optimization Approaches}
\vspace*{12pt}
The problems discussed in this paper can be framed as convex optimization problems. Some constraints are imposed to ensure the uniqueness of the optimal solution. It is well-known that the convex optimization problem can be solved by various methods, such as the interior point method, the penalty and barrier method, the primal-dual method, etc \cite{Luenberger2008}. Because the optimal solution is expected to be on the boundary of the search space, the augmented Lagrangian method is introduced. To further guarantee the positivity of the weights, the projected gradient descent method is introduced to facilitate solving the subproblem in the augmented Lagrangian method.
\bmsubsection{Augmented Lagrangian Method}
\label{subsec: Augmented Lagrangian Method}
Consider the following optimization problem:
\begin{gather}
    \label{eq: general optimization problem}
    \begin{aligned}
        \min_{x}    & \quad f(x) \\
        \text{s.t.} & \left\{
        \begin{array}{l}
            h(x) = 0, \\
            g(x)\leq 0.
        \end{array}
        \right.
    \end{aligned}
\end{gather}
where $h(x)\in\mathbb{R}^m$ and $g(x)\in\mathbb{R}^p$ are the equality and inequality constraints, respectively. Define the augmented Lagrangian function as:
\begin{gather*}
    \mathscr{L}_{\rho}(x, \mu, \sigma) = f(x) + \sum_{i=1}^m\frac{\rho}{2}\left[
        h_i(x) + \frac{\mu_i}{\rho}
        \right]^2 + \sum_{j=1}^p\frac{\rho}{2}\left[
        \max\left\{
        0, g_j(x) + \frac{\sigma_j}{\rho}
        \right\}
        \right]^2,
\end{gather*}

The augmented Lagrangian method is an iterative method solving the following subproblems:
\begin{gather*}
    \begin{aligned}
        x^{k+1}        & = \argmin_{x} \mathscr{L}_{\rho}(x, \mu^k,\sigma^k) &               \\
        \mu^{k+1}_i    & = \mu^k_i + \rho h_i(x^{k+1}),                      & i=1,\cdots,m, \\
        \sigma^{k+1}_j & = {\max}\left\{
        \sigma^k_j  + \rho g_j(x^{k+1}),0
        \right\},      & j=1,\cdots,p.
    \end{aligned}
\end{gather*}

Paper \cite{Birgin2014} shows that the augmented Lagrangian method converges to the points that satisfy the Karush-Kuhn-Tucker conditions of the original optimization problem; that is, in convex optimization problems, the augmented Lagrangian method converges to the global optimal solution.

\subsection{Projected Gradient Descent}
\label{subsec: Projected Gradient Descent}
It is common that the search space of the subproblem in the augmented Lagrangian method is a convex set. In this paper, the convex set is the non-negative orthant. Consider the following optimization problem:
\begin{gather}
    \label{eq: optimization problem w/ convex set}
        \min_{x}  f(x), \quad  s.t.   x\in\mathcal{X},
\end{gather}
where $\mathcal{X}$ is a closed convex set. Define the projection operator as $
    \mathcal{P}_{\mathcal{X}}(x) = \argmin_{y\in\mathcal{X}}\|y-x\|_2.
$
The unique existence of the projection operator follows from the Hilbert projection theorem \cite{Dimitri2015}. The projection gradient descent method is:
\begin{gather*}
    x^{k+1}= \mathcal{P}_{\mathcal{X}}\left[
        x^k-\alpha \nabla f(x^k)
        \right],
\end{gather*}
where $\alpha>0$ is the step size. The descent property of the projection gradient descent method follows by letting $z=x^k-\alpha \nabla f(x^k),x^{k+1}=\mathcal{P}_{\mathcal{X}}\left(
    z
    \right),$ and $x=x^k$  in Projection Theorem \cite{Ciarlet2013,Dimitri2015}:
\begin{gather*}
    \label{eq: Bourbaki-Cheney-Goldstein inequality}
    \langle z-\mathcal{P}_{\mathcal{X}}(z),x-\mathcal{P}_{\mathcal{X}}(z)\rangle \leq 0,\forall x \in \mathcal{X},\\
    \left\langle x^k-\alpha \nabla f(x^k)-x^{k+1},x^k-x^{k+1}\right\rangle \leq 0,\\
    \alpha\left\langle \nabla f(x^k),x^{k+1}-x^k\right\rangle \leq -\left\|x^k-x^{k+1}\right\|^2\leq 0.
\end{gather*}
Therefore, the projection gradient descent method is guaranteed to converge to the optimal solution for the convex optimization problem.
\bmsection{On optimization problems}%
\bmsubsection{Proof that \cref{eq: optimal stepsize for real average consensus} is the solution to \cref{eq: min max problem for average consensus}}
\label{sec: Optimization Approaches.1a}
\begin{proof}
    With \cref{assumption: undirected connected graph}, the eigenvalues of Laplacian can be ordered $0< \lambda_2(\mathcal{L})\leq \cdots \leq \lambda_N(\mathcal{L})$. Problem \cref{eq: optimal stepsize for real average consensus} can be rewritten as the following problem using the slack variable $\eta$:
    \begin{gather*}
        \min_{r\in\mathbb{R}_+}  \quad \eta   \quad
        \text{s.t. }            \quad  \left\{
        \begin{array}{lc}
            -\eta\leq 1-\frac{1}{r}\lambda_i(\mathcal{L}), & \quad \forall i=\{2,\cdots, N\} \\
            1-\frac{1}{r}\lambda_i(\mathcal{L})\leq \eta,  & \quad \forall i=\{2,\cdots, N\}
        \end{array}
        \right.
    \end{gather*}

    Since $\eta$ is greater than any $1-\frac{1}{r}\lambda_i(\mathcal{L})$, it is equivalent to bound it from below using the maximum value of $1-\frac{1}{r}\lambda_i(\mathcal{L})$, which yields $\eta\geq  1-\frac{1}{r}\lambda_2(\mathcal{L})$. Similarly, the negative $\eta$ needs to be bounded by the minimum, which yields $-\eta\leq 1-\frac{1}{r}\lambda_N(\mathcal{L})$. Therefore, an equivalent problem is:
    \begin{gather*}
        \min_{r\in\mathbb{R}_+}  \quad \eta    \quad
        \text{s.t. }             \quad  \left\{
        \begin{array}{l}
            -\eta\leq 1-\frac{1}{r}\lambda_N(\mathcal{L}), \\
            1-\frac{1}{r}\lambda_2(\mathcal{L})\leq \eta,
        \end{array}
        \right.
    \end{gather*}

    The Lagrangian function associated with the above problem is:
    \begin{gather*}
        \mathcal{L}(r,\eta,\mu_1,\mu_2)=\eta+\mu_1\left(
        \frac{1}{r}\lambda_N(\mathcal{L})-1-\eta
        \right)+\mu_2\left(
        1-\frac{1}{r}\lambda_2(\mathcal{L})-\eta
        \right),
    \end{gather*}
    where $\mu_1,\mu_2$ are the Lagrange multipliers. The KKT conditions for the above problem are:
    \begin{subequations}
        \label{eq: KKT conditions for min max problem}
        \begin{align}
            \label{eq: KKT conditions for min max problem 1}
            \frac{\partial \mathcal{L}}{\partial \eta}  =1-\mu_1-\mu_2                                                                    & =0,     \\
            \label{eq: KKT conditions for min max problem 2}
            \frac{\partial \mathcal{L}}{\partial r}     =-\frac{\mu_1}{r^2}\lambda_N(\mathcal{L})+\frac{\mu_2}{r^2}\lambda_2(\mathcal{L}) & =0,     \\
            \label{eq: KKT conditions for min max problem 3}
            \mu_1\left(
            \frac{1}{r}\lambda_N(\mathcal{L})-1-\eta
            \right)                                                                                                                       & =0,     \\
            \label{eq: KKT conditions for min max problem 4}
            \mu_2\left(
            1-\frac{1}{r}\lambda_2(\mathcal{L})-\eta
            \right)                                                                                                                       & =0,     \\
            \label{eq: KKT conditions for min max problem 5}
            \mu_1,\mu_2                                                                                                                   & \geq 0.
        \end{align}
    \end{subequations}

    Solving \cref{eq: KKT conditions for min max problem 1,eq: KKT conditions for min max problem 2} finds that $\mu_1=\frac{\lambda_2(\mathcal{L})}{\lambda_2(\mathcal{L})+\lambda_N(\mathcal{L})},\mu_2=\frac{\lambda_N(\mathcal{L})}{\lambda_2(\mathcal{L})+\lambda_N(\mathcal{L})}$, with which \cref{eq: KKT conditions for min max problem 3,eq: KKT conditions for min max problem 4} together imply \cref{eq: optimal stepsize for real average consensus}
\end{proof}

\bmsubsection{The LMI formulation of the problem}
\label{subsec: LMI formulation}
It can be shown that not only are the problems \cref{eq: modified finite conditional number problem for node weighted Laplacian} convex, but they can also be framed into LMI problems, and solved by modern SDP methods. This section serves as an alternate way to demonstrate that the problem discussed in the paper is convex. Furthermore, the optimal solution found by the LMI formulation will be used as the benchmark to evaluate the performance of the distributed algorithms. For the node weight optimization problem, adopt the notation as in the proof of \cref{lemma: convexity of the optimization problem}. The problem \cref{eq: modified finite conditional number problem for node weighted Laplacian} can be rewritten as:
\begin{gather}
    \label{eq: LMI formulation for node weight optimization}
    \min_{{\mathbf{w}}\in\mathbf{R}^N_+}  \quad {\kappa}\quad
    \text{s.t.}                               \quad {\kappa} I_{N-1}\succeq C^T\diag\left(
    \hat{\mathbf{w}}
    \right)C\succeq I_{N-1}.
\end{gather}

On the other hand, if one were to investigate the traditional edge weight, one can refer to papers \cite{Xiao2004,Shafi2012}. For completeness, here we also provide the LMI formulation for the edge weight optimization problem.
\begin{gather}
    \label{eq: LMI formulation for edge weight optimization}
    \min_{{\mathbf{w}}\in\mathbf{R}^N_+} {\kappa}, \quad \text{s.t.}  \quad {\kappa} I_{N}\succeq \mathcal{B}^T\diag\left(
    \hat{\mathbf{w}}
    \right)\mathcal{B}\succeq I_{N}-\frac{1}{N}\mathbf{1}_N\mathbf{1}_N^T.
\end{gather}

\bmsection{Algorithm related}
\subsection{Simulation parameters}
\label{subsec: Simulation parameters}
The key parameters are given in \cref{tab: parameter settings}. The overall optimization is considered to reach an optimal point when the absolute relative error of the Lagrange multiplier is less than the tolerance, e.g., $\left|\sigma^{k+1}-\sigma^k\right|\leq \varepsilon_{\sigma}$. For the outer iteration, the stopping criterion for the weight update in\cref{eq: overall iterative algorithm for node weighted Laplacian 1} is determined by any of the following conditions:
\begin{inlineenum}
    \item the maximum number of descent steps $t\leq t_{\max}$ is reached;
    \item the relative change in the objective function is less than the tolerance $\left|\lambda_N\left[
            \hat{\mathcal{L}}_{\mathbf{w}(t+1)}
            \right]-\lambda_N\left[
            \hat{\mathcal{L}}_{\mathbf{w}(t)}
            \right]\right|\leq \varepsilon_{\lambda_N}$;
    \item the relative change in the infinity norm satisfies $        \left\|
        \mathbf{w}(t+1)-\mathbf{w}(t)
        \right\|_\infty\leq \varepsilon_{\mathbf{w}}$ for four consecutive iterations.
\end{inlineenum}

On the other hand, the inner iteration for the eigen-pair estimation is considered to converge when the relative change in the eigenvalues is less than the tolerance, e.g., $x(T+1)-x(T)\leq \varepsilon_{x}$. The normalization process is considered to converge when the relative change in the normalized entries is less than the tolerance, e.g., $\left|\tilde{x}_i(T+1)-\tilde{x}_i(T)\right|\leq \varepsilon_{\tilde{x}}$. In the following simulation, an extra 50 steps are added to the bound of the periodic number $p$ in \cref{eq: choice of p with lower bound} to ensure the fast convergence of the Fielder eigenvector.

\begin{table}[htbp]
    \centering
    \begin{tabularx}{.95\textwidth}{>{\centering\arraybackslash\hsize=1.6\hsize}X|>{\centering\arraybackslash\hsize=0.4\hsize}X|>{\centering\arraybackslash\hsize=1.6\hsize}X|>{\centering\arraybackslash\hsize=0.4\hsize}X}
        Parameter setting                                         & Values         & Tolerance setting                                                            & Values            \\
        \hline
        Initial node weights ${\mathbf{w}}(0)$                    & $\mathbf{1}_N$ & Tolerance in infinity norm of  weights $\varepsilon_{\mathbf{w}}$            & $5\times 10^{-2}$ \\
        Initial Lagrange multiplier $\sigma(0)$                   & $0$            & Tolerance in Lagrange multiplier $\varepsilon_{\sigma}$                      & $ 10^{-1}$        \\
        Step size $\gamma$                                        & $0.001$        & Maximum number of descent steps $t_{\max}$                                   & $750$             \\

        Penalty parameter $\rho$                                  & $20$           & Tolerance in eigenvalue estimation $\varepsilon_{{x}},\varepsilon_{{y}}$     & $5\times 10^{-6}$ \\
        Tolerance in objective function $\varepsilon_{\lambda_N}$ & $10^{-3}$      & Tolerance in normalization $\varepsilon_{\tilde{x}},\varepsilon_{\tilde{y}}$ & $10^{-4}$         \\
    \end{tabularx}
    \caption{Parameter settings for the optimization of edge and node weights}
    \label{tab: parameter settings}
\end{table}
\subsection{Finding the eigenvalue}
\label{subsec: Finding the eigenvalue}
Suppose the $i$th entry of the eigenvectors $x$  of matrix $\mathcal{L}_\mathbf{w}$, is known to the agent $i$. It is known that $\mathcal{L}_\mathbf{w}x=\lambda x$. Therefore, for nonzero entry, the agent $i$ can estimate the corresponding eigenvalue $\lambda$ by performing the following operation:
\begin{gather*}
    \lambda=
    \frac{1}{x_i} \sum\limits_{
        \begin{small}
            \begin{array}{c}
                \ell\in\mathcal{N}_i \\
                e_\ell=(v_i,v_j)
            \end{array}
        \end{small}
    }w_\ell \left(
    x_i-x_j
    \right).
\end{gather*}
Since $x\neq0$, there must be a nonzero entry $x_j$ in the graph that is able to estimate the eigenvalue, any agent with zero $x_i$ can obtain the eigenvalue $\lambda$ using the max-consensus.

\bmsection*{Author Biography}
\begin{biography}{\includegraphics[width=76pt,height=76pt]{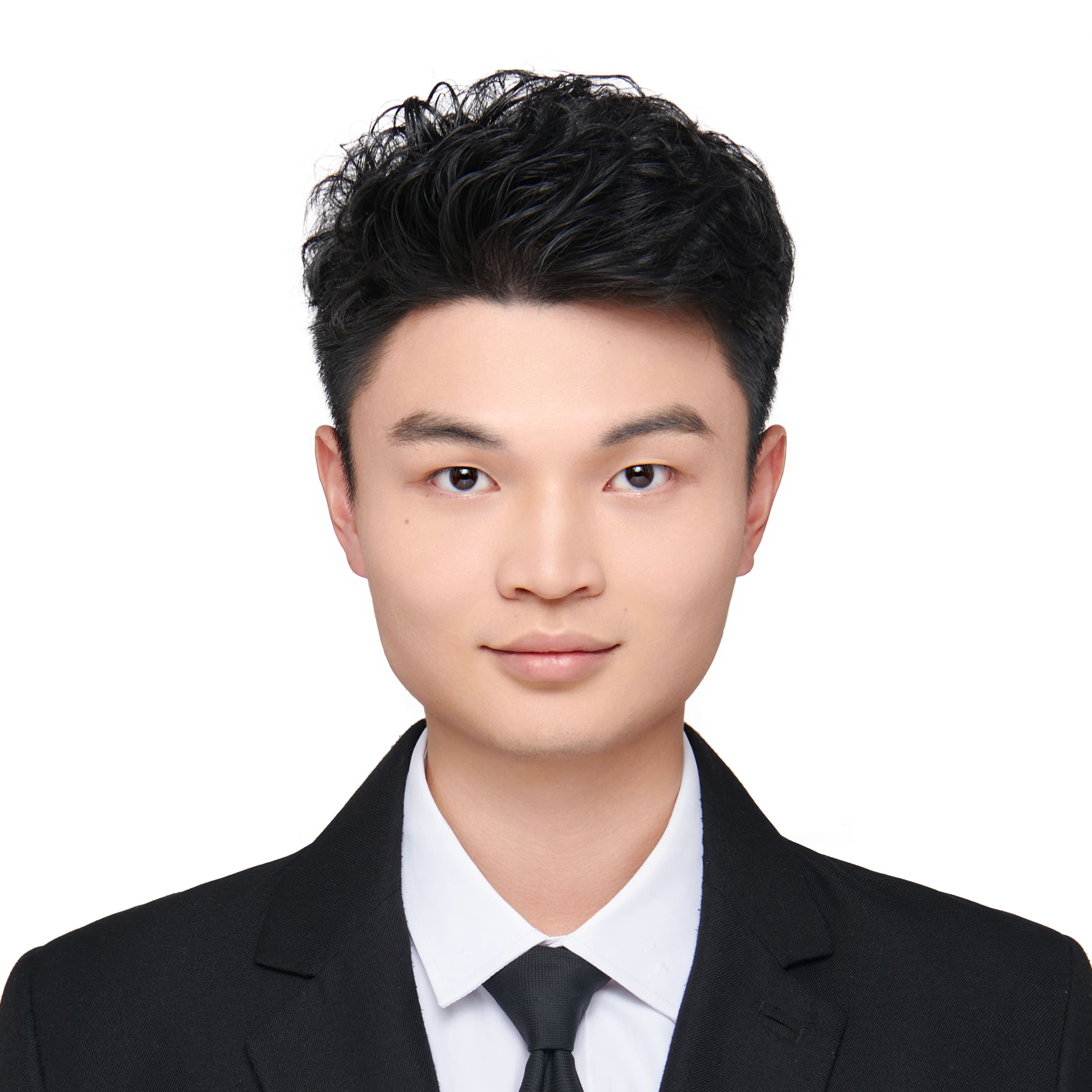}}{
        {\textbf{Yicheng Xu.} Yicheng Xu received the B.S. degree in automation from Southeast University, Nanjing, China in 2020 and the M.S. degree in mechanical engineering from the University of California, Irvine in 2021. He has been a Ph.D. student since 2021.   His research interests include distributed optimization, multi-agent systems, event-triggered control, and anti-windup control.}}
\end{biography}
\begin{biography}{\includegraphics[width=76pt,height=76pt]{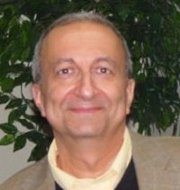}}{
        {\textbf{Faryar Jabbari.} Faryar Jabbari is on the faculty of the Mechanical and Aerospace Engineering Department of UCI. His research is in control theory and its
                applications. He has served as an associate editor for Automatica and
                IEEE Transactions on Automatic Control, the program chair for ACC-11
                and CDC-09, and the general chair for CDC-14.}}
\end{biography}
\end{document}